\numberwithin{equation}{section}
\newtheorem{theorem}{Theorem}
\newtheorem{ex}[theorem]{Example}
\newtheorem{prop}[theorem]{Proposition}
\newtheorem{definition}[theorem]{Definition}
\newtheorem{rem}[theorem]{Remark}
\def\S{{\mathbb S}}
\def\Na{{\mathbb N}}
\def\Z{{\mathbb Z}}
\def\R{{\mathbb R}}
\def\K{{\mathbb K}}
\def\Sp{{\mathbf{Sp}}}
\def\SpB{{\mathbf{Sp}^{\H}}}
\def\Set{{\mathbf{Set}}}
\def\SetB{{\mathbf{Set}^{\H}}}
\def\Vec{{\mathbf{Vec}}}
\def\gVec{{\mathbf{gVec}}}
\def\H{{\mathcal{H}}}
\def\p{{\textsf{p}}}
\def\q{{\textsf{q}}}
\def\r{{\textsf{r}}}
\def\s{{\textsf{s}}}
\def\o{{\textsf{o}}}
\def\exp{{\textsf{e}}}
\def\eB{{\textsf{e}_{\H}}}
\def\lB{\ell_{\H}}
\def\DQL{DQ\Lambda}
\def\F{\mathcal{F}}
\def\S{{\mathcal{S}}}
\def\Kb{\overline{K}}
\def\KB{K^{\H}}
\def\KBb{\overline{K^{\H}}}
\def\KBt{\widetilde{K^{\H}}}
\def\n{\bar{n},n}
\def\pmn{\pm n}
\def\pmm{\pm m}
\def\pms{\pm s}
\def\pmt{\pm t}
\def\pmmn_n{[\pm (n+m)]\backslash[\pm n]}
\def\pmmns_mn{[\pm (n+m)]\backslash[\pm n]}
\def\m{\bar{m},m}
\def\Is{(I,\sigma )}
\def\I0{(\bar{I}\cup I,\sigma_0)}
\def\Jt{(J,\tau)}
\def\J0{(\bar{J}\cup J,\sigma_0)}
\def\Im{I/\sigma}
\def\Ss{S,\sigma_S}
\def\Ts{T,\sigma_T}
\def\pmS{\pm S}
\def\pmT{\pm T}
\def\pmU{\pm U}
\def\pmV{\pm V}
\def\sn{s\colon [n]\rightarrow [\bar{n},n]}
\def\seI{s\colon I/\sigma\rightarrow I}
\def\sI0{s\colon I\rightarrow I\cup \bar{I}}
\def\sJ0{s\colon J\rightarrow J\cup \bar{J}}
\def\id{\mathrm{id}}
\def\can{\mathrm{can}}
\def\st{\mathrm{st}}
\def\can{\mathrm{can}}
\def\Sn{\mathbf{\mathcal{S}}_n}
\def\Bn{\mathcal{B}_n}
\def\QSym{\mathrm{QSym}}
\def\DQSym{\mathrm{DQSym}}
\newcommand{\quasishuf}{{\,\begin{picture}(10,5)
\multiput(0,0)(5,0){3}{\line(0,1){5}}%
\put(0,0){\line(1,0){10}}
\put(-0.2,-1){\line(1,0){10.5}}\end{picture}~}}
\def\shuf{{\sqcup\!\sqcup}}
\title[Hyperoctahedral species]{Hyperoctahedral species}
\author[N.~Bergeron]{N.~Bergeron}
\address{Mathematics and
Statistics, York University, 4700 Keele St., Toronto, ON, M5A 4T5, Canada}
\author[P.~Choquette]{P.~Choquette}
\email{bergeron@mathstat.yorku.ca}
\email{philcho@mathstat.yorku.ca}
\subjclass[2000]{16W30; 18D10; 05E10}
\begin{document}
\begin{abstract}
We introduce a new definition for the species of type $B$, or $\H$-species, analog to the classical species (of type $A$), but on which we consider the action of the groups $\Bn$ of signed permutations. We are interested in algebraic structure on these $\H$-species and give examples of Hopf monoids. The natural way to get a graded vector space from a species, given in this paper in terms of functors, will allow us to deepen our understanding of these species. In particular, the image of the classical species $\ell^{\ast}\circ \exp_+$ under a given functor is isomorphic to the combinatorial Hopf algebra $\DQSym$.
\end{abstract}

\maketitle

\parskip=8pt

\begin{section}{Introduction}
The theory of species of Joyal \cite{Joyal} has open a lot of interesting problems in combinatorics. In particular, many generalizations of species have been studied, most of them as $\Sn$-modules, as in Bergeron \cite{Bergeron}, Mendez and Nava \cite{Mendez}, but as modules of other groups as well, Joyal and Street \cite{JoyalStreet} for the general linear groups and Hetyei, Labelle and Leroux \cite{HLL} for the hyperoctahedral groups. Using the theory of species as it was formulated in \cite{Joyal}, some have been interested in finding relationships with well-known algebras such as the descent algebra \cite{PR} or combinatorial Hopf algebras \cite{Aguiar}.

In this paper, we want to formulate the definition of $\H$-species, denoted $\SpB$, given in terms of $\H$-sets, on which there is a natural action of the groups $\Bn$. It turns out that these $\H$-species are equivalent to cubical species defined by Hetyei, Labelle and Leroux \cite{HLL}. The former are given as functors from the category of $\H$-sets to the category of vector spaces, where $\H$-sets are sets on which there is an involution without fixed points. 

Our three main goals are to study the algebraic structure of some $\H$-species, construct $\H$-species from usual species and forming graded vector spaces, from $\H$-species. Our first aim will be achieved by endowing the $\H$-species with a twisted multiplication and comultiplication to obtain Hopf monoids. Then we give a well-chosen functor, $\S$, from the usual species to the $\H$-species. It is well-chosen by the fact that it sends the regular representation of $\Sn$ to the regular representation of $\Bn$. Finally, the natural way to form a graded vector space from a species will allow us to see a connection between a well-known Hopf algebra, $\DQSym$ and the usual species $\ell^*\circ \exp_+$, using the functor $\S$ and the functor $\KBt\colon\SpB\rightarrow \gVec$, defined in Section~(\ref{Sec:functors}).

Aguiar and Mahajan \cite{Aguiar} have shown that the graded algebra associated to the species $\ell^*\circ \exp_+$ is isomorphic to the algebra $\QSym$, which has a basis indexed by compositions. On the other hand, the algebra $\DQSym$ of diagonally symmetric functions has a basis indexed by bicompositions. Furthermore, the map from $\DQSym$ to $\QSym$ which sends a bicomposition to a composition can be found using the species theory. It is the natural transformation $\alpha$ in the following diagram 
\begin{equation}\label{Diag:KBt}
\xymatrix{ & \SpB \ar[dr]^{\KBt} \ar@{=>}[d]^{\alpha} \\ \Sp \ar[rr]_{\Kb} \ar[ur]^{\S} & & \gVec}
\end{equation}
where the maps will be defined in Section~(\ref{Sec:functors}).

Acknowledgment: Thanks to Marcelo Aguiar for his many suggestions and interesting problems regarding the $\H$-species.

\end{section}

\section{Premilinaries}
We begin by a brief introduction of species and cubical species. For more details, see \cite{Aguiar},\cite{BLL}, \cite{Joyal} and \cite{HLL}. Recall that $[n]=\{1,2,\dots,n\}$. Let $\Set$ and $\Vec$ be respectively the categories of finite sets with bijections and vector spaces over $\K$ with linear maps. 

\begin{definition}
A species $\p$ is a functor 
$$\p\colon\Set\rightarrow \Vec.$$
So for each finite set $I$, there is a vector space, denoted $\p[I]$, and if we consider the sets $[n]$, $n\geq 0$, we will use the notation, $p[n]$ for $\p[\{1,2,\dots,n\}]$. Also, for each bijection $f:I\rightarrow J$ there is a linear map $\p[f]\colon\p[I]\rightarrow \q[J]$.
\end{definition}

The collection of species forms a category, denoted $\Sp$. The species' morphisms $\alpha\colon\p\rightarrow \q$ are natural transformations, i.e. for each finite set $I$, a linear map $\alpha_I\colon\p[I]\rightarrow \q[I]$ such that for all bijection $\sigma\colon I\rightarrow J$ of finite sets the following diagram commutes
\begin{equation}
\begin{CD}
\p[I] @>\alpha_I>> \q[I]\\
@V\p[\sigma] VV  @VV\q[\sigma] V\\
\p[J] @>>\alpha_J> \q[J]
\end{CD}
\end{equation}

Each permutation $\pi\in \Sn$ induces a map
$$\p[\pi]\colon\p[n]\rightarrow \p[n]$$
which makes $\p[n]$ a $\Sn$-module. So a species $\p$ can equally be defined by a sequence $\p[0],\p[1],\p[2],\dots$ of $\Sn$-modules.

Some example of species,
\begin{enumerate}
\item {\bf Exponential species}: $\exp[I]:=\K$, for all $I$,
\item {\bf Linear order species}: $\ell[I]:=\K$-span of all linear order on $I$,
\item {\bf Graph species}: $G[I]:= \K$-span of all graphs on vertex set $I$.
\end{enumerate}

Hetyei, Labelle and Leroux \cite{HLL}, defined a theory of species, the cubical species, for which the hyperoctahedral groups, $\Bn$, are the acting groups. $\Bn$ is the group of signed permutation. A signed permutation $w=w_1\dots w_n$ is a permutation such that some of the integers can be barred. The group $\Bn$ is generated by the transpositions $s_i=(i,i+1)$, $1\leq i \leq n$ and by the signed permutation $s_0=\bar{1}2\dots n$. We give here the essential of the definition of cubical species.

For a finite set $I$, let $E_I$ be the euclidean space of functions $x\colon I\rightarrow \R$ with orthonormal basis $e_i\colon I\rightarrow \R$, defined by
\begin{equation*}
e_i(j)=\begin{cases} 1 &\text{if $i=j$,}\\ 0 & \text{otherwise.} \end{cases}
\end{equation*}
Every element $x\in E_I$, where $x\colon I\rightarrow \R$, can be written as $x=\sum_{i\in I} x(i)e_i$. The category of cubes is the category with objects the Euclidean spaces $E_I$, one for each finite set $I$, and one space $E_{-1}$, for the empty set. Note that $E_n:=E_{\{1,2,\dots,n\}}$. The morphisms are the isometries $\eta\colon E_I\rightarrow E_J$ taking the cube $\Box_I$ into the cube $\Box_J$, where the cube $\Box_I$ is the following convex hull:
$$\Box_I=\mathrm{conv}(\{0\}\cup \{x\in E_I : x(I) \subseteq \{-1,1\}\})$$
Again, let $\Box_n:=\Box_{[n]}$. In fact, the isometries are the bijections taking the set $\{\pm e_i:i\in I\}$ into the set $\{\pm e_j:j\in J\}$. Such an isometry $\eta$ can be written as $(\sigma,\epsilon)$, where $\sigma\colon I\rightarrow J$ is a bijection and $\epsilon$ is a map from $I$ to $\{-1,1\}$. Applied to a basis element $e_i$, it then takes the form $\eta(e_i)=\epsilon(i)\cdot e_{\sigma(i)}$. The cubical morphisms taking the $n$-cube into itself form a group: the hyperoctahedral group $\Bn$.

It can be useful to note that the vertices of $\Box_n$ are functions $x\colon [n]\rightarrow [-1,1]$, where $[-1,1]$ is the real interval, such that $x(i)\in\{-1,1\}$, for all $i\in [n]$. A nonempty face of $\Box_n$ is encoded as a vector $(\Phi(1),\dots,\Phi(n))$, where $\Phi$ is a function $\Phi\colon [n]\rightarrow \{-1,1,\ast\}$. Set $\Phi(i)=1$ or $\Phi(i)=-1$ if every element $x$ of the face is such that $x(i)=1$ or $x(i)=-1$. Otherwise, set $\Phi(i)=\ast$.

A cubical species is a functor from the category of cubes to the category of vector spaces with linear maps. Here are some examples of cubical species:
\begin{ex}

{\bf Uniform species} $E$: \begin{equation} E[E_n]=\begin{cases} \K[E_n] & \text{for $n\geq 0$,}\\ \emptyset & \text{for $n=-1$.}\end{cases}\end{equation}

{\bf Species of faces} $\F$:
\begin{equation}
\F[E_U]=\K[\Phi : \Phi \text{ is a face of $\Box_U$}]
\end{equation}


\end{ex}

\section{$\H$-Species}
We have seen that a species is a sequence of $\Sn$-modules. We want to generalize the theory to have a sequence of $\Bn$-modules. Keep in mind that $[\bar{n},n]$ will denote the set $\{\bar{n},\dots,\bar{1},1,\dots,n\}$. When it is more convenient we will use the notation $[\pm n]$ for the same set, where $-i=\bar{i}$ for all $i\in[n]$.

\begin{definition}
An $\H$-set $(I,\sigma)$ is a finite set $I$, together with an involution $\sigma$ on $I$, where $\sigma$ is without fixed points. A bijection of $\H$-sets between $(I,\sigma)$ and $(J,\tau)$, called a $\H$-bijection, is a bijection $f\colon I\rightarrow J$ such that this diagram commutes:
\begin{equation}
\begin{CD}
I @>{f}>> J \\
@V{\sigma}VV  @VV{\tau}V\\
I @>>{f}> J
\end{CD}
\end{equation}
We will write $f\colon \Is\rightarrow \Jt$ for such a $\H$-bijection.
\end{definition}

Let $\SetB$ be the category of $\H$-sets with $\H$-bijections. There is a natural involution $\sigma_0\colon \Na\cup \overline{\Na}\rightarrow \Na\cup \overline{\Na}$, where $\overline{\Na}=\{\bar{1},\bar{2},\dots\}$, that is define, for $i\in\Na$ and $\bar{i}\in\overline{\Na}$ by $\sigma_0(i)=\bar{i}$ and $\sigma_0(\bar{i})=i$. Each time we consider a set $S=\{s_1,\dots,s_k\}\subset \Na$ and its negative $\overline{S}=-S=\{\overline{s_1},\dots,\overline{s_k}\}$, we will endow it with the natural involution $\sigma_0$. We may then omit the involution in this case, i.e. $\overline{S}\cup S=\pm S:=(\overline{S}\cup S,\sigma_0)$.

There is a full subcategory of $\SetB$ composed of the $\H$-sets $([\bar{n},n],\sigma_0)$, where $[\bar{n},n]=[\pm n]=\{\bar{n},\dots,\bar{1},1,\dots,n\}$ and $\sigma_0(i)=\bar{i}$, one for every $n$, called the skeleton of $\SetB$. Indeed, for every $\H$-set $(I,\sigma)$ there is a unique $n$ for which $\pi:(I,\sigma)\rightarrow ([\bar{n},n],\sigma_0)$ is a $\H$-bijection. So we can work uniquely with these $\H$-sets $[\n]$ without loss of generality.

We want to define order-preserving $\H$-bijection. To do that, we need to order the $\H$-sets. The natural order on the sets $[\bar{n},n]$ is $\bar{n}<\dots<\bar{1}<1<\dots <n$. We want to set an order on $I$, for any $\H$-set $(I,\sigma)$, for all $I\subseteq \Z$. Consider the set
\begin{equation}
\Im:=\{\mathrm{max}(i,\sigma(i)) : \forall i\in I\}.
\end{equation}

In particular $[\bar{n},n]/\sigma_0=[n]$. Let $I/\sigma=\{i_1,\dots, i_n\}$ be ordered by the natural order of the integers. Then the overall order on $I$ will be $\sigma(i_n)<\dots<\sigma(i_1)<i_1<\dots<i_n$. For example, let $I=\{-5,-1,2,3,8,9\}$ and $\sigma(-5)=3$, $\sigma(-1)=2$ and $\sigma(8)=9$. Then $I/\sigma=\{2,3,9\}$ and $I=\{8,-5,-1,2,3,9\}$ when ordered. For any other finite set $I$, we pick one order for each couple $(i,\sigma(i))$ and one order for $\Im$.

Two bijections in $\Set$ and two $\H$-bijections in $\SetB$ are particularly important for us in this paper. They are the canonical map, $can$, and the standardization map, $st$. They are defined to be the only order-preserving maps between sets of same cardinality. Explicitly, let $I,J$ in $\Set$ be two sets of same cardinality. Then $$\st\colon I\rightarrow [|I|],\,\can\colon I \rightarrow J.$$
Similarly, for $\H$-sets $\Is$, $\Jt$ and $(\n,\sigma_0)$, where $|I|=|J|=2n$ and $|I/\sigma|=n$, define the $st$ and $can$ maps for $\H$-sets as 
$$\st\colon I \rightarrow [\n],\,\can\colon I\rightarrow J$$
where $\st\circ\sigma=\sigma_0\circ\st$ and $\can\circ\sigma=\tau\circ\can$.

\begin{definition}
A species of type $B$, or $\H$-species, with values in $\Vec$, the category of vector spaces over $\K$ with linear maps, is a functor
$$\p:\SetB\rightarrow \Vec$$
\end{definition}

So there is one vector space $\p[(I,\sigma)]$ for each $(I,\sigma)\in \SetB$ and one linear map $\p[f]$, for each $\H$-bijection $f\colon (I,\sigma)\rightarrow (J,\tau)$. If the context is clear, we may write $\p[I,\sigma]$ instead of $\p[(I,\sigma)]$ and $\p[\bar{n},n]$ instead of $\p[([\bar{n},n],\sigma_0)]$. An arrow $\alpha\colon \p\rightarrow \q$ between two $\H$-species is a natural transformation, i.e. a family of maps
$$\alpha_{I,\sigma}\colon\p[I,\sigma]\rightarrow \q[I,\sigma]$$
one for each $(I,\sigma)\in\SetB$, such that for each $\H$-bijection $f\colon (I,\sigma)\rightarrow (J,\tau)$,
this diagram commutes:

\begin{equation}
\begin{CD}
\p[I,\sigma] @>\alpha_{I,\sigma}>> \q[I,\sigma] \\
@V{\p[f]}VV  @VV{\q[f]}V\\
\p[J,\tau] @>>\alpha_{J,\tau}> \q[J,\tau]
\end{CD}
\end{equation}
Reformulating the diagram as

\begin{equation}
\xymatrix{ & \p[I,\sigma] \ar[rr]^{\alpha_{I,\sigma}} \ar[dd]^<<<<{\p[f]} \ar[dl]_{\p[\sigma]} & & \q[I,\sigma] \ar[dd]^{\q[f]} \ar[dl]^{\q[\sigma]} \\
\p[I,\sigma] \ar[rr]_>>>>>>{\alpha_{I,\sigma}} \ar[dd]^{\p[f]} & & \q[I,\sigma] \ar[dd]_>>>>{\q[f]} \\
& \p[J,\tau] \ar[rr]^>>>>>>{\alpha_{J,\tau}} \ar[dl]^{\p[\tau]} & & \q[J,\tau] \ar[dl]^{\q[\tau]}\\
\p[J,\tau] \ar[rr]_{\alpha_{J,\tau}} & & \q[J,\tau]}
\end{equation}
these equalities must be satisfied:
\begin{equation}
\begin{aligned}
&\q[f]\circ\alpha_{I,\sigma}\circ\p[\sigma]  = \q[f]\circ\q[\sigma]\circ\alpha_{I,\sigma} = \q[\tau]\circ\q[f]\circ\alpha_{I,\sigma}=\\
&\q[\tau]\circ\alpha_{J,\tau}\circ\p[f] = \alpha_{J,\tau}\circ\p[\tau]\circ\p[f]  = \alpha_{J,\tau}\circ\p[f]\circ\p[\sigma]
\end{aligned}
\end{equation}

Denote $\SpB$ the category of species of type $B$, or $\H$-species.

\begin{rem}
The set of $\H$-bijection taking the $\H$-set $[\n]$ to itself form a group, the hyperoctahedral group or the group of signed permutations, $\Bn$. More generally, the set of $\H$-bijection taking $(I,\sigma)$ to itself is the group $B_{I\backslash \sigma}$, which is isomorphic to $\Bn$, if $|I|=2n$.
\end{rem}

\begin{prop}
A $\H$-species $\p$ can be defined as a sequence
$$\p[\emptyset],\p[\bar{1},1],\p[\bar{2},2],\dots$$
of $\Bn$-module and such that morphism of $\H$-species are maps
$$\p[\n]\rightarrow \q[\n]$$
of $\Bn$-modules.
\end{prop}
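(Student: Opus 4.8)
The plan is to prove this by the same skeleton argument that identifies a classical species with a sequence of $\Sn$-modules, now carried out inside the groupoid $\SetB$. The proposition is really the assertion that $\SpB$ is equivalent to the category whose objects are sequences $(V_n)_{n\geq 0}$ of $\Bn$-modules and whose morphisms are sequences of $\Bn$-module maps, so I would organize the argument as the construction of two mutually inverse passages together with a check that they match up the morphisms.

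First I would record that $\SetB$ is a groupoid and that, by the discussion preceding the statement, every $\H$-set $(I,\sigma)$ with $|I|=2n$ admits a \emph{unique} $\H$-bijection to the skeletal object $([\n],\sigma_0)$, namely the standardization $\st\colon(I,\sigma)\to([\n],\sigma_0)$. Hence the inclusion of the full subcategory on the objects $([\n],\sigma_0)$, $n\geq 0$, is an equivalence onto $\SetB$, and by the Remark the automorphism group of $([\n],\sigma_0)$ in this skeleton is exactly $\Bn$ (with $B_0$ trivial, so $\p[\emptyset]$ is simply a vector space). Since a functor out of the one-object groupoid $\Bn$ into $\Vec$ is precisely the datum of a $\Bn$-module, restricting an $\H$-species $\p$ to the skeleton yields the sequence $\p[\emptyset],\p[\bar{1},1],\p[\bar{2},2],\dots$, where $\pi\in\Bn$ acts on $\p[\n]$ through $\p[\pi]$.

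Conversely, given a sequence $(V_n)_{n\geq 0}$ of $\Bn$-modules, I would define an $\H$-species $\p$ by setting $\p[I,\sigma]:=V_n$ whenever $|I|=2n$, and, for an $\H$-bijection $f\colon(I,\sigma)\to(J,\tau)$, by letting $\p[f]$ be the action on $V_n$ of the signed permutation $\st_J\circ f\circ\st_I^{-1}\in\Bn$, where $\st_I$ and $\st_J$ are the standardizations of $(I,\sigma)$ and $(J,\tau)$. Functoriality is then immediate from the identity $\st_K\circ(g\circ f)\circ\st_I^{-1}=(\st_K\circ g\circ\st_J^{-1})\circ(\st_J\circ f\circ\st_I^{-1})$, valid for composable $\H$-bijections $f\colon(I,\sigma)\to(J,\tau)$ and $g\colon(J,\tau)\to(K,\rho)$; that is, standardization carries composition of $\H$-bijections to multiplication in $\Bn$. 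Preservation of identities is clear since the standardization of a skeletal object is its identity.

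Finally, for the statement about morphisms, I would show that a natural transformation $\alpha\colon\p\to\q$ is completely determined by its skeletal components $\alpha_{[\n]}\colon\p[\n]\to\q[\n]$, and that these are exactly the $\Bn$-module maps. Naturality applied to the standardization maps forces $\alpha_{I,\sigma}=\q[\st_I]^{-1}\circ\alpha_{[\n]}\circ\p[\st_I]$, so the components off the skeleton are recovered from those on it; and naturality applied to the automorphisms $\pi\in\Bn$ gives precisely $\alpha_{[\n]}\circ\p[\pi]=\q[\pi]\circ\alpha_{[\n]}$, which is the $\Bn$-equivariance condition. The main thing to verify carefully is that the two passages above are mutually inverse and that these morphism correspondences are compatible with them in both directions; this is routine but is where the content sits, being the type-$B$ analogue of the classical skeletal identification, with the only real subtlety being the bookkeeping that composition of $\H$-bijections matches multiplication of signed permutations under $\st$.
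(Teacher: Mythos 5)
Your proof is correct and takes essentially the same approach as the paper's (much terser) proof: functoriality of $\p$ gives the $\Bn$-action on the skeletal objects $[\n]$, and conversely the $\st$ and $\can$ maps extend a sequence of $\Bn$-modules to a functor on all of $\SetB$, with the morphism correspondence checked on the skeleton. One small slip worth noting: $\st$ is the unique \emph{order-preserving} $\H$-bijection $(I,\sigma)\to([\n],\sigma_0)$, not the unique $\H$-bijection (any composite with an element of $\Bn$ is another one); but since your construction only needs a fixed chosen isomorphism for each $(I,\sigma)$, nothing in the argument breaks.
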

\begin{proof}[\bf Proof.]
Since $\p$ is a functor, each $\pi\in \Bn$ induces a map
$$\p[\pi]\colon\p[\n]\rightarrow \p[\n]$$
of vector spaces, which turns $\p[\n]$ into a $\Bn$-module. Conversely, any sequence of $\Bn$-modules can be seen as a functor $\p\colon\SetB\rightarrow\Vec$ by using the $\can$ and $\st$ maps.
\end{proof}


\begin{prop}
The category of cubes is equivalent to the category of $\H$-sets.
\end{prop}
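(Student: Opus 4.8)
The plan is to construct an explicit functor $F$ from the category of cubes to $\SetB$ and to prove it is an equivalence by verifying that it is fully faithful and essentially surjective. On objects I would send the cube $E_I$ to the $\H$-set $(\pm I,\sigma_0)$, where $\pm I = I\cup\bar{I}$ and $\sigma_0$ is the sign-flip involution $i\mapsto\bar{i}$, with the empty-set cube $E_{-1}$ going to $(\emptyset,\sigma_0)$. The underlying idea is that the set $\{\pm e_i : i\in I\}$ of signed basis vectors is already an $\H$-set under negation, and $(\pm I,\sigma_0)$ is simply this $\H$-set with $\pm e_i$ relabelled $\pm i$. On morphisms, given an isometry $\eta=(\sigma,\epsilon)\colon E_I\to E_J$ with $\eta(e_i)=\epsilon(i)e_{\sigma(i)}$, I would define $F(\eta)$ to be the restriction of $\eta$ to signed basis vectors, that is, the bijection $\pm I\to\pm J$ sending $i\mapsto\epsilon(i)\sigma(i)$ and $\bar{i}\mapsto\overline{\epsilon(i)\sigma(i)}$. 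The obvious quasi-inverse, for intuition, sends an $\H$-set $\Is$ to the cube $E_{\Im}$ indexed by its orbit set.

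First I would check that $F(\eta)$ is an $\H$-bijection: because $\eta$ is linear it satisfies $\eta(-e_i)=-\eta(e_i)$, so its restriction commutes with $\sigma_0$, which is exactly the defining condition for a morphism of $\SetB$. Functoriality is then immediate, since $F$ is the restriction of linear maps to a spanning set and therefore respects identities and composition. Faithfulness is equally direct: an isometry of $E_I$ is a linear map, hence determined by its values on the basis $\{e_i\}$ and a fortiori by its restriction to $\{\pm e_i\}$, so $F(\eta)=F(\eta')$ forces $\eta=\eta'$.

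For fullness I would start from an arbitrary $\H$-bijection $g\colon(\pm I,\sigma_0)\to(\pm J,\sigma_0)$ and extend it linearly to $\tilde{g}\colon E_I\to E_J$ by $\tilde{g}(e_i)=g(i)$, reading a signed element $\pm j$ as $\pm e_j$. Since $g$ is a bijection commuting with negation it carries $\{\pm e_i\}$ bijectively onto $\{\pm e_j\}$, so $\tilde{g}$ takes an orthonormal basis to an orthonormal basis and is an isometry; moreover it takes $\Box_I$ to $\Box_J$, as each cube is the convex hull of $\{0\}$ together with its signed basis. Thus $\tilde{g}$ is a cube morphism with $F(\tilde{g})=g$. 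Here I would lean on the characterization recorded above, that the cube morphisms are exactly the bijections of $\{\pm e_i\}$ onto $\{\pm e_j\}$; this is what makes fullness more than a formal statement and is the step I expect to carry the real content.

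Finally, essential surjectivity reduces to the standardization machinery set up earlier: given any $\H$-set $\Is$, its involution is fixed-point free, so the section $\seI$ selecting the $\sigma$-maximal representative of each orbit identifies $I$ with $\pm(\Im)$, and the map $\st$ is an $\H$-bijection $\Is\to([\n],\sigma_0)=F(E_n)$ with $2n=|I|$. Hence every object of $\SetB$ is isomorphic to one in the image of $F$, and the three properties together yield the desired equivalence.
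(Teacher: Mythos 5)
Your proposal takes essentially the same route as the paper: identify an $\H$-set with the set of signed basis vectors $\{\pm e_i\}$ and match morphisms in both directions (an isometry of the stated form $e_i\mapsto\epsilon(i)e_{\sigma(i)}$ restricts to an $\H$-bijection, and conversely every $\H$-bijection extends linearly to such an isometry). You simply package this more completely than the paper does, as an explicit functor checked to be fully faithful and essentially surjective, with essential surjectivity supplied by the standardization map; the paper works only on skeletal objects $([\bar n,n],\sigma_0)$ and leaves those bookkeeping steps implicit.

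One justification in your fullness argument is wrong, although the step it supports is true. You assert that ``each cube is the convex hull of $\{0\}$ together with its signed basis''; it is not. The set $\mathrm{conv}(\{0\}\cup\{\pm e_i : i\in I\})$ is the cross-polytope (the generalized octahedron), whereas $\Box_I=\mathrm{conv}(\{0\}\cup\{x\in E_I : x(I)\subseteq\{-1,1\}\})$ has as vertices the $2^{|I|}$ points $\sum_{i\in I}\epsilon_i e_i$ with $\epsilon_i\in\{-1,1\}$. The repair is one line: since $\tilde g$ permutes the signed basis vectors and is linear, it sends each vertex $\sum_{i}\epsilon_i e_i$ to another point of the same form, hence carries $\Box_I$ onto $\Box_J$. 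Alternatively, note that $\tilde g$ is of the form $e_i\mapsto\epsilon(i)e_{\sigma(i)}$ and invoke the characterization of cube morphisms that you already cite, which makes the convex-hull remark unnecessary.
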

\begin{proof}[\bf Proof.]
Identify a $\H$-set $([\n],\sigma_0)$ with the set of functions $\{\pm e_i: i\in [n]\}$. Then every $\H$-bijection $\tau:([\n],\sigma_0)\rightarrow ([\m],\sigma_0)$ gives rise to an isometry $\phi\colon E_{n}\rightarrow E_{m}$ that takes the cube $\Box_{n}$ into the cube $\Box_{m}$, or in other words it gives rise to a map that takes the set $\{\pm e_i: i\in[n]\}$ to the set $\{\pm e_j :j\in [m]\}$. Conversely, every isometry $\phi\colon E_{n}\rightarrow E_{m}$ that takes the cube $\Box_{n}$ into the cube $\Box_{m}$ can be written as $\phi(e_u)=\epsilon (u)\cdot e_{\sigma(u)}$, for $e_u\in E_{n}$, where $\sigma\colon [n]\rightarrow [m]$ is a bijection and $\epsilon\colon [n]\rightarrow \{-1,1\}$ is a map. Which means that such an isometry is a $\H$-bijection.
\end{proof}

This shows the equivalence between our definition of $\H$-species and the definition of cubical species of Hetyei, Labelle and Leroux \cite{HLL}. 

It can be useful to find the equivalent objects in terms of $\H$-species for the vertices and the faces of the cubes $\Box_n$. For this we need the notion of a section map $\seI$. It is a map such that $s(i)\in \{i,\sigma(i)\}$. In particular, for $s\colon [n]\rightarrow [\pm n]$, $s(i)=\pm i$. For practical reasons, it is useful to denote a section by a list of its images $(s(1),\dots,s(n))$.

We have seen that the vertices of the cube $\Box_n$ are the functions $x\colon [n]\rightarrow [-1,1]$ such that $x(i)\in \{-1,1\}$. Denote a vertex $x$ by the list its images: $x=(x(1),\dots,x(n))$. So each vertex is a section map $s=(s(1),\dots,s(n))$, where $s(i)=i$ if $x(i)=1$ and $s(i)=\bar{i}$ if $x(i)=-1$. The faces of the cubes $\Box_n$ are vector $(\Phi(1),\dots, \Phi(n))$, where $\Phi(i)=1$ or $\Phi(i)=-1$ if every vertex $x$ in the face is such that $x(i)=1$ or $x(i)=-1$, respectively, and $\Phi(i)=\ast$, otherwise. We can identify such a face $(\Phi(1),\dots, \Phi(n))$ with a set of section maps $S=\{\sn\}$ such that for all $s\in S, s(i)=i$ if $\Phi(i)=1$, $s(i)=\bar{i}$ if $\Phi(i)=-1$ and if $\Phi(i)=\ast$, $s(i)$ can either be $i$ or $\bar{i}$. In other words, given a face $\Phi$, let $A=\{a_1,\dots,a_k\}\subset [n]$ be the set $A=\{i:\Phi(i)=1 \text{ or } \Phi(i)=-1\}$ and let $s(A)=(s(a_1),\dots,s(a_k))$, where $s(a_i)=a_i$ if $\Phi(i)=1$ and $s(a_i)=\overline{a_i}$ if $\Phi(i)=-1$. Now given the list $s(A)$, let $\{s(A)^+\}$ be the set all the lists $(s(1),\dots,s(n))$ with $(s(a_1),\dots,s(a_k))$ as a sublist. Then $S=\{s(A)^+\}$.

For example, consider the $\H$-set $([\bar{2},2],\sigma_0)$. The vertices of $\Box_2$ correspond to the section maps: 
$$(1,1),(1,-1),(-1,1),(-1,-1)\leftrightarrow (1,2),(\bar{1},2),(1,\bar{2}),(\bar{1},\bar{2})$$
and the faces of $\Box_2$ correspond to the set of sections maps:
\begin{equation*}
\begin{aligned} 
& (1,1),(1,-1),(-1,1),(-1,-1),(1,\ast),(-1,\ast),(\ast,1),(\ast,-1),(\ast,\ast) \leftrightarrow \\
& \{(1,2)\}, \{(1,\bar{2})\}, \{(\bar{1},2)\}, \{(\bar{1},\bar{2})\}, \{(1,2), (1,\bar{2})\}, \{(\bar{1},2), (\bar{1},\bar{2})\},\\
& \{(1,2),(\bar{1},2)\}, \{(1,\bar{2}),(\bar{1},\bar{2})\}, \{(1,2), (1,\bar{2}), (\bar{1},2), (\bar{1},\bar{2})\}
 \end{aligned}
 \end{equation*}

\begin{ex}
{\bf Exponential species} 

$\eB[\Is]:=\K$, for all $\H$-sets $\Is$. It is the trivial representation.

{\bf Linear order species}
\begin{equation*}
\begin{aligned} 
\lB[\Is]&=\K\text{-span of linear orders on } s(\Im), \text{ for all section maps $\seI$}\\
&=\bigoplus_{\seI} \ell[s(\Im)]\\
\end{aligned}
\end{equation*}
where $\ell$ is the usual species of linear order. For example,
$$\lB[\bar{2},2]=\ell[\{1,2\}]\oplus \ell[\{\bar{1},2\}]\oplus\ell[\{1,\bar{2}\}]\oplus\ell[\{\bar{1},\bar{2}\}]$$
It is the regular representation.

{\bf The species of section maps}
Corresponding to the cubical species of faces:
$$\F[\n]=\K[\{s(A)^+\}: A\subseteq  [n], s\colon A\rightarrow \bar{A}\cup A]$$

\end{ex}

\section{Product and monoids}

In this section, we set the definitions for monoids, comonoids and Hopf monoids in the category of $\H$-species. For this we need the concept of an $\H$-set partition and of an $\H$-set composition. An $\H$-set partition, $P=\{P_1,\dots, P_k\}$, of an $\H$-set $(I,\sigma)$ is a set of disjoint subsets $P_i$ of $I$ such that $\sigma(P_i)=P_i$, for all $1\leq i\leq k$. We write $P\vdash (I,\sigma)$. An $\H$-set composition, $F=F_1|\dots |F_k$, or an $\H$-composition of $(I,\sigma)$ is an ordered list of disjoint subsets of $I$ such that $\sigma(F_i)=F_i$, for $1\leq i\leq k$. We write $F\models \Is$. A decomposition is a composition with two parts. Each part of an $\H$-set composition $F=F_1|\dots |F_k \models \Is$, or an $\H$-set partition, is itself an $\H$-set. We write $(F_i,\sigma_{F_i})$, where $\sigma_{F_i}$ is the restriction of $\sigma$ to $F_i$. If $F_1|\dots|F_k$ is a composition of $[\n]$, we use $(F_i,\sigma_0)$ for each part.

\begin{prop}
$(\SpB,\cdot)$ is a symmetric monoidal category, with product
\begin{equation}
(\p\cdot \q)[I,\sigma]=\bigoplus_{S|T\models (I,\sigma)} \p[\Ss]\otimes \q[\Ts].
\end{equation}
The unit for this product is
\begin{equation*} \o[(I,\sigma)]=\begin{cases} \K & \text{if $I=\emptyset$,}\\ 0 & \text{otherwise.}\end{cases}\end{equation*}
and the braiding is given by
$$\beta:\p\cdot\q\simeq \q\cdot\p.$$
\end{prop}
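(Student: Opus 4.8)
The plan is to verify, in turn, that $\cdot$ is a bifunctor on $\SpB$, that $\o$ is a two-sided unit, that the product is associative, and that $\beta$ is a symmetry, each time reducing the coherence conditions to those already available in $(\Vec,\otimes)$ together with purely combinatorial identities on $\H$-compositions. First I would check that $\p\cdot\q$ is again an $\H$-species and that $\cdot$ is functorial in both arguments. Given an $\H$-bijection $g\colon \Is\to \Jt$, the assignment $S|T\mapsto g(S)|g(T)$ is a bijection between decompositions of $\Is$ and of $\Jt$: since $g\circ\sigma=\tau\circ g$, each $\sigma$-invariant part $S$ has $\tau$-invariant image, and $g|_S\colon (S,\sigma_S)\to (g(S),\tau_{g(S)})$ is again an $\H$-bijection. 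Defining $(\p\cdot\q)[g]$ summandwise by $\p[g|_S]\otimes\q[g|_T]$ then gives a functor $\SetB\to\Vec$, with functoriality following from that of $\p$ and $\q$; functoriality in the species arguments is immediate, since a pair of natural transformations acts summandwise.

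Next, the unit axioms. Because $\o[S,\sigma_S]=0$ unless $S=\emptyset$, the only surviving summand of $(\o\cdot\p)[I,\sigma]$ is the one indexed by $\emptyset\,|\,I$, yielding a natural isomorphism $\K\otimes\p[I,\sigma]\cong\p[I,\sigma]$; the right unit is identical, using the summand $I\,|\,\emptyset$. These isomorphisms are natural both in $\Is$ (so they are morphisms of $\SpB$) and in $\p$.

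For associativity I would expand both iterated products. Using distributivity of $\otimes$ over $\oplus$, both $((\p\cdot\q)\cdot\r)[I,\sigma]$ and $(\p\cdot(\q\cdot\r))[I,\sigma]$ become $\bigoplus \p[S,\sigma_S]\otimes\q[T,\sigma_T]\otimes\r[U,\sigma_U]$, the sum ranging over three-part $\H$-compositions $S|T|U\models\Is$; here I use that refining a single part of an $\H$-composition again produces $\H$-compositions, since $\sigma$-invariance is inherited by the refined parts. The associator is then assembled from that of $\Vec$ applied summandwise, and the pentagon axiom reduces to the pentagon in $\Vec$ together with the strict associativity of refinement of $\H$-compositions.

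Finally, the braiding: define $\beta_{\p,\q}$ to send the summand $\p[S,\sigma_S]\otimes\q[T,\sigma_T]$ indexed by $S|T$ to the summand $\q[T,\sigma_T]\otimes\p[S,\sigma_S]$ indexed by $T|S$ via the swap of $(\Vec,\otimes)$. This is a natural isomorphism, and $\beta_{\q,\p}\circ\beta_{\p,\q}=\id$ because swapping twice returns the index $S|T$ to itself; the two hexagon identities reduce to those of $(\Vec,\otimes)$ after the reindexing of the three-part compositions $S|T|U$ by the relevant permutations. I expect the only genuine content, beyond transporting the coherence of $(\Vec,\otimes)$, to be the combinatorial bookkeeping of $\H$-compositions --- checking that taking parts, refining, and permuting them is compatible with the $\sigma$-invariance condition and behaves exactly as for ordinary set compositions. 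Once that is in place, every coherence diagram commutes because it does so on each matched pair of summands in $\Vec$.
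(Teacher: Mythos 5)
Your proposal is correct and takes the same route the paper intends: the paper gives no details itself, merely asserting that the axioms of Mac Lane are straightforward to check and that the verification parallels the type $A$ case of $(\Sp,\cdot)$ in Aguiar--Mahajan. Your summandwise reduction of every coherence diagram to $(\Vec,\otimes)$, together with the observation that $\H$-set compositions behave under restriction, refinement, image, and permutation exactly like ordinary set compositions (because $\sigma$-invariance of parts is preserved by each of these operations), is precisely that straightforward check carried out in full.
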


The axioms, found in \cite{Mac}, are straightforward to check. The proof that $(\Sp,\cdot)$ is a symmetric monoidal category in \cite{Aguiar} is very similar.

A {\it monoid} in $\SpB$ is a species $\p$ with multiplication map $\mu$ and unit map $\eta$:
$$\mu:\p\cdot\p\rightarrow \p, \eta:\o\rightarrow \p$$
which are associative and unital in the usual sense. There is one linear map for each $\H$-set $(I,\sigma)$ and each $\H$-set decomposition $S|T \models \Is$:
$$\mu_{S,T}:\p[\Ss]\otimes\p[\Ts]\rightarrow \p[\Is]$$
and one linear map for the unit
$$\eta_{\emptyset}:\K\rightarrow \p[\emptyset].$$
A {\it comonoid} in $\SpB$ is a species $\p$ with a comultiplication map $\Delta$ and counit map $\epsilon$:
$$\Delta:\p\rightarrow \p\cdot \p, \epsilon :\p\rightarrow \o$$
which are coassociative and counital in the usual sense. There is one map for each $\H$-set $\Is$:
$$\Delta_{(I,\sigma)}\colon \p[I,\sigma]\rightarrow \bigoplus_{S|T\models \Is} \p[\Ss]\otimes\p[\Ts]$$
and for each decomposition $(S,T)\models \Is$, let
$$\Delta_{S,T}\colon \p[I,\sigma]\rightarrow \p[\Ss]\otimes\p[\Ts]$$
and one linear map for the counit:
$$\epsilon_{\emptyset}:\p[\emptyset]\rightarrow \K,$$
A {\it bimonoid} is a species with monoid and comonoid structures such that $\Delta$ and $\epsilon$ are morphisms of monoids. A {\it Hopf monoid} $\p$ is a bimonoid with a map $s:\p\rightarrow \p$, the antipode, satisfying for each $\H$-set $(I,\sigma)$, the following two equations:
\begin{equation}
\begin{aligned}
\oplus\mu_{S,T}\circ (s_S\otimes id)\circ \oplus\Delta_{S,T}&=\oplus\mu_{S,T}\circ (id\otimes s_T)\circ \oplus\Delta_{S,T} =0\\
\mu_{\emptyset,\emptyset}\circ(\id \otimes s_{\emptyset})\circ \Delta_{\emptyset}&=\eta_{\emptyset}\circ\epsilon_{\emptyset}\\
\end{aligned}
\end{equation}
where, for the first equality, the direct sum is over all decomposition $S|T\models \Is$.

It is interesting to note that as it is the case for bialgebras, if $\p$ is connected, i.e. $\p[\emptyset]=\K$, and $\p$ is a bimonoid then $\p$ is a Hopf monoid. See \cite{Aguiar} for details. For a species $\p$, its dual species $\p^{\ast}$ is defined as
$$\p^{\ast}[I,\sigma]=\p[I,\sigma]^{\ast}$$
where $\p[I,\sigma]^{\ast}$ is the dual vector space of $\p[I,\sigma]$.

\begin{ex}
The $\H$-species $\lB$ is a Hopf monoid. For an $\H$-set $(I,\sigma)$ and a decomposition $S|T$, the multiplication is given by concatenation:
\begin{equation}
\begin{aligned}
\mu^{\lB}_{S,T}\colon \lB[S,\sigma_S]\otimes\lB[T,\sigma_T] &\rightarrow \lB[I,\sigma]\\
l_1\otimes l_2 &\mapsto l_1l_2
\end{aligned}
\end{equation}
where for lists $l_1=l^1_1|\dots |l^i_1$ and $l_2=l^1_2|\dots|l_2^j$, $l_1l_2=l^1_1|\dots |l^i_1|l^1_2|\dots|l_2^j$. The comultiplication is given by the deshuffle of the parts: 
\begin{equation}
\begin{aligned}
\Delta^{\lB}_{(I,\sigma)} \colon \lB[I,\sigma] &\rightarrow \bigoplus_{S|T\models (I,\sigma)} \lB[S,\sigma_S]\otimes\lB[T,\sigma_T] \\
l &\mapsto \sum_{S|T\models \Is} l_{|_S}\otimes l_{|_T}
\end{aligned}
\end{equation}
where for a set $A$ and a list $l=1_1\dots l_k$, $l_{|_A}=l_1\cap A | \dots | l_k\cap A$.

The $\H$-species $\F$ is a Hopf monoid. The multiplication is given by
\begin{equation}
\begin{aligned}
\mu^{\F}_{S,T}\colon \F[S,\sigma_S]\otimes \F[T,\sigma_T] & \rightarrow \F[I,\sigma]\\
\{F_1,\dots,F_k\}\otimes \{G_1,\dots,G_l\} &\mapsto \{F_i\cup G_j:\: 1\leq i\leq k,\, 1\leq j\leq l\}
\end{aligned}
\end{equation}
and the comultiplication is given by
\begin{equation}
\begin{aligned}
\Delta^{\F}_{I,\sigma} \colon \F[I,\sigma] &\rightarrow \bigoplus_{S|T\models (I,\sigma)} \F[S,\sigma_S]\otimes \F[T,\sigma_T]\\
\{H_1,\dots, H_m\} &\mapsto \sum_{S|T\models (I,\sigma)} (H\cap S)\otimes (H\cap T)
\end{aligned}
\end{equation}

For example, for $\{(1),(\bar{1})\}\in \F[(\{\bar{1},1\},\sigma_0)]$, $\{(\bar{2})\}\in \F[(\{\bar{2},2\},\sigma_0)]$ and $\{(1,2),(1,\bar{2})\}\in \F[\bar{2},2]$, we have:
\begin{equation*}
\begin{aligned}
\mu^{\F}(\{(1),(\bar{1})\},\{(\bar{2})\}) =& \{ (1,\bar{2}),(\bar{1},\bar{2})\}\\
\Delta^{\F}(\{(1,2),(1,\bar{2})\}) =&\{(1,2),(1,\bar{2})\}\otimes \emptyset + \{(1)\}\otimes \{(2),(\bar{2})\} + \\
& \{(2),(\bar{2})\}\otimes \{(1)\} + \emptyset\otimes \{(1,2),(1,\bar{2})\}
\end{aligned}
\end{equation*}

\end{ex}

\begin{ex}
$\eB$ is a self-dual Hopf monoid by using the isomorphism $I\mapsto (I)^{\ast}$, where $I$ is the only element in $\eB[(I,\sigma)]$. 

The dual Hopf monoid $\lB^{\ast}$ has multiplication and comultiplication given by the shuffle and the deconcatenation. Let $S|T\models (I,\sigma)$ and $l_1\in \lB^{\ast}[S,\sigma_S]$, $l_2\in \lB^{\ast}[T,\sigma_T]$, $l\in \lB^{\ast}[I,\sigma]$. Then the multiplication is $l_1^{\ast}\otimes l_2^{\ast}\mapsto \sum_{l\in l_1\shuf l_2} l^{\ast}$, where the shuffle $l_1\shuf l_2$ of linear orders $l_1=l_1^1\dots l_1^k$ of a set $S$ and $l_2=l_2^1\dots l_2^l$ of a set $T$, where $S$ and $T$ are disjoint, is defined to be the set of linear order $\{l=l^1\dots l^m\}$ where $l|_S=l_1$ and $l|_T=l_2$. The comultiplication is $l^{\ast}\mapsto \sum_{l=l_i|l_f} l_i^{\ast}\otimes l_f^{\ast}$, where the sum is over all decomposition of $l$ into an initial segment $l_i$ and a final segment $l_f$.
\end{ex}

\section{Composition of species}

The goal of this section is to recall the definition of composition of species (Joyal \cite{Joyal}) and the algebraic structures on these new species (Aguiar \cite{Aguiar}). Let $I$ be a finite set. A set partition $F$ of $I$ is a set of disjoint subsets of $I$, $\{F_1,F_2,\dots, F_n\}$. We write $F\vdash I$. A set composition $G=G_1|G_2|\dots|G_k$ of $I$ is an ordered list of disjoint subsets of $I$. We write $G\models I$. 
Let $F=F_1|\dots|F_k\models S$ and $G=G_1|\dots|G_l \models T$ be two set compositions. The restriction of $F$ to a set $S'\subseteq S$ is the set composition
$$F|_{S'}=F_1\cap S'|\dots|F_k\cap S'$$
where we delete any empty parts. The concatenation of $F$ and $G$ is the set composition of $S\cup T$ defined by
$$F|G=F_1|\dots|F_k|G_1|\dots|G_l.$$

The shuffle of $F$ and $G$, $F\shuf G$, is the set of all set compositions $H=H_1|\dots|H_m$ of $S\cup T$, such that $H|_S=F$, $H|_T=G$ and also $m=k+l$, where $k$ is the number of parts of $F$ and $l$ is the number of parts of $G$. The quasishuffle of $F$ and $G$, $F\quasishuf G$, is the set of all set compositions $H$ of $S\cup T$ such that $H|_S=F$ and $H|_T=G$. In other words, a set composition $H$ is a quasishuffle of $F$ and $G$ if it can be obtained by shuffling the parts of $F$ and $G$ and two adjacent parts $F_i|G_j$ or $G_j|F_i$ can be replaced by $F_i\sqcup G_j$, given that $F_i$ is a part of $F$ and $G_j$ is a part of $G$.

For a composition $G=G_1|\dots|G_l$, let, for any species $\p$, $\p(G)=\p[G_1]\otimes \dots \otimes \p[G_l]$. The composition of two species $\p$ and $\q$, where $\q[\emptyset]=0$, is defined by
\begin{equation}
(\p\circ\q)[I]:=\bigoplus_{X\vdash I} \p[X] \otimes (\bigotimes_{S\in X} \q[S]).
\end{equation}

A positive species is a species $\p\in\Sp$ such that $\p[\emptyset]=0$. Denote by $\Sp_+$ the subcategory of positive species. We can form a positive species $\p_+$ from any species $\p$, by setting \begin{equation*} \p_+[I] = \begin{cases} \p[I] & \text{if $I\neq \emptyset$,}\\ 0 & \text{otherwise.} \end{cases} \end{equation*}

\begin{prop}
For any positive species $\p\in\Sp_+$,
\begin{equation}
\ell\circ\p=\bigoplus_{n\geq 0} \p^{\cdot n}
\end{equation}
where $\p^{\cdot 0}=\o$. Furthermore, for any finite set $I$,
$$(\ell\circ \p)[I]=\bigoplus_{F\models I} \p(F).$$
\end{prop}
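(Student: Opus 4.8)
The plan is to unwind both sides on a fixed finite set $I$ and match the indexing data; the only real content is the coherence bookkeeping for the unordered tensor product that appears inside $\ell\circ\p$.

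First I would expand the composition. By definition,
$$(\ell\circ\p)[I]=\bigoplus_{X\vdash I} \ell[X]\otimes\Big(\bigotimes_{S\in X}\p[S]\Big),$$
where $X$ ranges over set partitions of $I$ and $\ell[X]$ is the $\K$-span of the linear orders on the block set $X$. The key observation is that a partition $X$ equipped with a linear order on its blocks is exactly the datum of a set composition $F\models I$ whose underlying partition is $X$. Tensoring the one-dimensional span of a fixed linear order with $\bigotimes_{S\in X}\p[S]$ selects an ordering $F=F_1|\dots|F_k$ of the blocks and thereby identifies that summand with the ordered tensor $\p(F)=\p[F_1]\otimes\dots\otimes\p[F_k]$. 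Running over all linear orders on all partitions then gives
$$(\ell\circ\p)[I]\cong\bigoplus_{F\models I}\p(F),$$
which is the second formula.

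For the first formula I would expand the $n$-fold product. Iterating the product of species,
$$\p^{\cdot n}[I]=\bigoplus_{I=I_1\sqcup\dots\sqcup I_n}\p[I_1]\otimes\dots\otimes\p[I_n],$$
the sum running over ordered decompositions of $I$ into $n$ possibly empty blocks. Since $\p$ is positive, $\p[\emptyset]=0$ annihilates every summand with an empty block, so only the decompositions into $n$ nonempty parts survive; these are precisely the set compositions of $I$ with exactly $n$ parts, and each contributes $\p(F)$. Hence
$$\p^{\cdot n}[I]=\bigoplus_{\substack{F\models I\\ F\text{ has }n\text{ parts}}}\p(F).$$
Summing over $n\geq 0$ and grading compositions by their number of parts yields $\bigoplus_{n\geq 0}\p^{\cdot n}[I]=\bigoplus_{F\models I}\p(F)=(\ell\circ\p)[I]$; the term $n=0$ contributes only the empty composition of $\emptyset$, matching $\p^{\cdot 0}=\o$.

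The final step is to verify naturality: both identifications are assembled only from relabelling blocks and permuting tensor factors, so for any bijection $f\colon I\to J$ the induced maps commute with $(\ell\circ\p)[f]$ and with $\big(\bigoplus_n\p^{\cdot n}\big)[f]$, upgrading the vector-space isomorphisms to isomorphisms of species. The main obstacle I expect is not the indexing, which is a clean bijection between set compositions and ordered set partitions, but the coherence check that tensoring $\ell[X]$ with the unordered product $\bigotimes_{S\in X}\p[S]$ yields a canonically defined, order-independent object isomorphic to $\bigoplus_{F}\p(F)$; this is precisely where the symmetric monoidal structure of $\Vec$ must be invoked with care.
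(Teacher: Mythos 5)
Your proof is correct and complete. The paper itself does not actually prove this proposition---it defers entirely to Aguiar--Mahajan---and your argument (a basis vector of $\ell[X]$ is a linear order on the blocks of the partition $X$, so each summand $\ell[X]\otimes\bigotimes_{S\in X}\p[S]$ decomposes as $\bigoplus \p(F)$ over compositions $F$ refining $X$; and positivity of $\p$ annihilates every summand of $\p^{\cdot n}[I]$ containing an empty block, leaving exactly the compositions with $n$ parts) is precisely the standard argument the cited reference supplies, with the naturality check and the coherence of the unordered tensor product correctly identified as the only points needing care.
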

The proof can be found in \cite{Aguiar}.

\begin{definition}
Let
$$T:\Sp_+\rightarrow \mathbf{Hopf}(\Sp)$$
be the functor defined by $T(\p)=\ell\circ \p$. The structure maps, multiplication and comultiplication, of $T(\p)$ are given by {\bf concatenation} and {\bf deshuffle}, respectively, and are defined the following way:
\begin{enumerate}
\item For a finite set $I$, a decomposition $S\sqcup T=I$ and two compositions $F\models S$ and $G\models T$, the component map of the multiplication is the direct sum of the identity maps
$$\p(F)\otimes \p(G)\rightarrow \p(F|G).$$
\item Let $F=F_1|\dots |F_k$ be a composition of $I$. For each decomposition $I=S\sqcup T$ such that $S$ and $T$ are a union of blocks of $F$, the component of the comultiplication is the map
\begin{equation}
\p(F)\rightarrow \p(F|_S)\otimes \p(F|_T).
\end{equation}
\end{enumerate}
\end{definition}

\begin{ex}
Consider the species $\exp$. Let $\exp_+$ be the positive species defined by
\begin{equation} \exp_+[I]=\begin{cases} \exp[I] & \text{if $I\neq \emptyset$,}\\ 0 & \text{otherwise.}\end{cases}\end{equation}
Now, $T(\exp_+)[I]=\bigoplus_{F\models I} \exp_+(F)$ and denote $F_1|F_2|\dots|F_k$ the only element in $\exp_+(F)=\exp_+[F_1]\otimes \dots \otimes \exp_+[F_k]$. For $I=[3]$, $F=1|2$ and $G=3$, the multiplication $\mu$ is given by $\mu((1|2)\otimes 3)= 1|2|3$. And for $1|23\models [3]$, the comultiplication is $\Delta(1|23) = 1|23\otimes \emptyset + 1\otimes 23 + 23\otimes 1 + \emptyset\otimes 1|23$
\end{ex}

\begin{definition}\label{Tvee}
Let
$$T^{\vee}:\Sp_+\rightarrow \mathbf{Hopf}(\Sp)$$
be the functor defined by $T^{\vee}(\p)=\ell^{\ast}\circ \p$. The structure maps, multiplication and comultiplication, are the {\bf shuffle} and the {\bf deconcatenation}, respectively. 
\begin{enumerate}
\item For a finite set $I$, fix a decomposition $S\sqcup T=I$. For each compositions $F\models S$, $G\models T$ the multiplication is the direct sum, over all quasishuffle $H$ of $F$ and $G$, of the unique map
$$\p(F)\otimes\p(G)\rightarrow \p(H)$$
obtained by reordering the factors.
\item For each composition $F_1|\dots |F_k\models I$, the comultiplication is the direct sum, over all decompositions $S\sqcup T=I$ for which $S$ is the union of the first
$i$ blocks, of
$$\p(F)\rightarrow \p(F|_S)\otimes \p(F|_T).$$
\end{enumerate}
\end{definition}

\begin{ex}
Consider the species $T^{\vee}(\exp_+)$. As a vector space it is isomorphic to $T(\exp_+)$ but not as a Hopf monoid. In fact the multiplication $\mu^{\ast}$ and comultiplication $\Delta^{\ast}$ are given, for $F=1|2$ and $G=3$, by $\mu^{\ast}((1|2)\otimes 3)=1|2|3+1|3|2+3|1|2+ 1|23 + 13|2$ and $\Delta^{\ast}(1|2)=1|2\otimes \emptyset + 1\otimes 2+ \emptyset\otimes 1|2$.

\end{ex}

\section{Functors}\label{Sec:functors}
In this section, we recall some useful facts about functors and make explicit the diagram~\ref{Diag:KBt}:
\begin{equation*}
\xymatrix{ & \SpB \ar[dr]^{\KBt} \ar@{=>}[d]^{\alpha} \\ \Sp \ar[rr]_{\Kb} \ar[ur]^{\S} & & \gVec}
\end{equation*}
A vector space $V$ is said to be graded if there is a sequence of vector spaces $V_i$, $i\geq 0$, such that $V=\bigoplus_{i\geq 0} V_i$. A morphism $f\colon V\rightarrow W$ of graded vector spaces is a sequence of morphisms $f_i\colon V_i\rightarrow W_i$, one for each $i$. We write $f=\bigoplus_{i\geq 0} f_i$. Let $\gVec$ be the category with objects and arrows just described. It becomes a monoidal category with the product, on component of degree $i$:
$$(V\cdot W)_i = \bigoplus_{j=0}^i V_j\otimes W_{n-j}$$

Let $F\colon C\rightarrow D$ be a functor between two monoidal categories $(C,\cdot)$ and $(D,\cdot)$ with unit $I$ in both categories. For the next definition, see \cite{Mac} and \cite{Aguiar} for more details.
\begin{definition}
$F$ is a lax tensor functor if there are natural transformations
$$\phi_{A,B}:F(A)\cdot F(B) \rightarrow F(A\cdot B)$$
$$\phi_0:I\rightarrow F(I)$$
such that $\phi$ is associative, in the sense that there two unambiguous maps
$$F(A)\cdot F(B)\cdot F(C) \rightarrow F(A\cdot B\cdot C)$$
and $\phi_0$ is left and right unital. In \cite{Mac}, this is called a monoidal functor.

$F$ is a tensor colax functor if there are natural transformations
$$\psi_{A,B}:F(A\cdot B) \rightarrow F(A)\cdot F(B)$$
$$\psi_0:F(I)\rightarrow I$$
satisfying dual axioms of the lax tensor functor.

$F$ is bilax if $(F,\phi)$ is lax, $(F,\psi)$ is colax and such that they satisfy the braiding condition, i.e. two unambiguous maps 
$$F(A\cdot B)\cdot F(C\cdot D)\rightarrow F(A\cdot C)\cdot F(B\cdot D)$$
and unital conditions.
\end{definition}

The importance of bilax tensor functors is that they preserve the structure of bimonoids.

\begin{definition}
The functor
$$\S:\Sp\rightarrow \SpB$$
is defined, for species $\p$, $\H$-sets $[\n]$ and $[\m]$ and $\H$-bijection $f\colon [\n]\rightarrow [\m]$ by
\begin{equation}
\begin{aligned}
\S\p[\n] &= \bigoplus_{s:[n]\rightarrow [\bar{n},n]} \p[s([n])]\\
\S\p[f] &= \bigoplus_{\sn} \p[f|_{s([n])}]
\end{aligned}
\end{equation}
where $s$ is a section map, i.e. for $i\in [n], s(i)\in \{i,\sigma(i)\}$.
\end{definition}

\begin{prop}
The functor $\S$ is bilax with natural transformations $\phi^{\S}=\id$, $\psi^{\S}=\id$, i.e. $\S(\p\cdot\q)\simeq\S\p\cdot\S\q$.
\end{prop}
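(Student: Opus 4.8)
The plan is to establish $\S(\p\cdot\q)\simeq\S\p\cdot\S\q$ directly on the skeleton $[\n]$ by rewriting both sides as direct sums over the same indexing data and observing that the matching summands are the \emph{same} vector space; this forces $\phi^{\S}$ and $\psi^{\S}$ to be the identity, and the bilax axioms then follow formally. First I would unfold the left-hand side using the definition of $\S$ together with the product $(\p\cdot\q)[K]=\bigoplus_{A\sqcup B=K}\p[A]\otimes\q[B]$ on $\Sp$, and unfold the right-hand side using the product on $\SpB$ from the proposition above followed by the definition of $\S$ on each part:
\begin{align*}
\S(\p\cdot\q)[\n] &=\bigoplus_{\sn}(\p\cdot\q)[s([n])]=\bigoplus_{\sn}\ \bigoplus_{A\sqcup B=s([n])}\p[A]\otimes\q[B],\\
(\S\p\cdot\S\q)[\n] &=\bigoplus_{S|T\models([\n],\sigma_0)}\ \bigoplus_{s_S\colon S/\sigma_0\to S}\ \bigoplus_{s_T\colon T/\sigma_0\to T}\p[s_S(S/\sigma_0)]\otimes\q[s_T(T/\sigma_0)].
\end{align*}

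The heart of the argument is a bijection between the two indexing sets. An $\H$-decomposition $S|T\models([\n],\sigma_0)$ is exactly a choice of subset $P\subseteq[n]$ with $S=\pm P$ and $T=\pm([n]\setminus P)$, so that $S/\sigma_0=P$ and $T/\sigma_0=[n]\setminus P$. Given such a $P$ together with sections $s_S\colon P\to\pm P$ and $s_T\colon[n]\setminus P\to\pm([n]\setminus P)$, gluing them yields a single section $\sn$, and setting $A=s_S(P)=s(P)$ and $B=s_T([n]\setminus P)=s([n]\setminus P)$ produces a decomposition $A\sqcup B=s([n])$. Conversely, from a pair consisting of a section $\sn$ and a decomposition $A\sqcup B=s([n])$ one recovers $P=\{i\in[n]:s(i)\in A\}$ and the restricted sections $s|_P$, $s|_{[n]\setminus P}$. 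These assignments are mutually inverse, and under the correspondence the summand $\p[A]\otimes\q[B]$ on the left is identical to $\p[s_S(S/\sigma_0)]\otimes\q[s_T(T/\sigma_0)]$ on the right. Hence the two direct sums agree termwise, the comparison map is the identity on each summand, and $\phi^{\S}=\psi^{\S}=\id$.

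It remains to check naturality and the bilax axioms. Naturality with respect to an $\H$-bijection $f\colon[\n]\to[\m]$ holds because $\S$ sends morphisms to the corresponding restrictions: on the left $\S(\p\cdot\q)[f]$ acts on the summand indexed by $(s,A\sqcup B)$ as $\p[f|_A]\otimes\q[f|_B]$, while on the right $(\S\p\cdot\S\q)[f]$ acts on the matching summand as $\p[f|_{s_S(P)}]\otimes\q[f|_{s_T([n]\setminus P)}]$; since $f$ carries $S|T$, $s_S$, $s_T$ precisely to the corresponding data for $[\m]$ under the bijection, the identity intertwines the two actions. Finally, because $\phi^{\S}$ and $\psi^{\S}$ are identities, the associativity of $\phi^{\S}$, the left/right unitality against $\o$, and the braiding compatibility all collapse to the associativity and symmetry of the products on $\Sp$ and $\SpB$, which hold by the monoidal structure established earlier; thus $\S$ is bilax. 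I expect the only real work to be the bookkeeping of the index bijection — matching an $\H$-decomposition $S|T\models[\n]$ with a subset $P\subseteq[n]$ and verifying that gluing the two part-sections reproduces exactly the section/decomposition pairs on the left — and the verification of naturality; the axiom checks themselves are automatic once $\phi^{\S}=\psi^{\S}=\id$.
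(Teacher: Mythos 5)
Your proof is correct, and it takes the route the paper intends: the paper itself offers no argument for this proposition (it says only ``The proof is straightforward''), and your index bijection $(s,\,A\sqcup B=s([n]))\leftrightarrow(S|T\models[\n],\,s_S,\,s_T)$, under which matching summands are literally the same vector space, is exactly the content that makes $\phi^{\S}=\psi^{\S}=\id$ and renders the bilax axioms formal. The only detail worth making explicit is the one you use implicitly: extending $\S$ from the skeleton to arbitrary $\H$-sets $(I,\sigma)$ by $\S\p[I,\sigma]=\bigoplus_{s\colon I/\sigma\to I}\p[s(I/\sigma)]$, which is needed for the parts $(S,\sigma_S)$, $(T,\sigma_T)$ of a decomposition to have well-defined images under $\S$.
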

The proof is straightforward.

Let $G$ be a group and $V$ be a $G$-module then $V^G:=\{x\in V:gx=x,\,\forall g\in G \}$ is the space of $G$-invariants of $V$, and $V_G:=V/\{x-gx:g\in G,\,x\in V\}$ is the space of $G$-coinvariants of $V$. Aguiar and Mahajan \cite{Aguiar} proved that $K$ and $\Kb$ are bilax tensor functor, where
\begin{equation}
K(\p)=\bigoplus_{n\geq 0} \p[n] \text{ and } \Kb(\p)=\bigoplus_{n\geq 0} \p[n]_{\Sn}
\end{equation}

In our case, three functors $\SpB\rightarrow \gVec$ can be defined:
\begin{definition}
Let
\begin{equation*}
\KB,\KBb,\KBt \colon \SpB\rightarrow \gVec
\end{equation*}
be define, for $\p\in\SpB$, by
\begin{equation}
\begin{aligned}
\KB(\p) &= \bigoplus_{n\geq 0} \p[\bar{n},n]\\
\KBb(\p) &= \bigoplus_{n\geq 0} \p[\bar{n},n]_{\Bn}\\
\KBt(\p) &= \bigoplus_{n\geq 0} \p[\bar{n},n]_{S_n}
\end{aligned}
\end{equation}
\end{definition}

Consider natural transformations $\phi$ and $\psi$:
\begin{equation}
\xymatrix{\KB(\p)\cdot\KB(\q) \ar@<1ex>[r]^-{\phi_{\p,\q}} & \KB(\p\cdot\q) \ar@<1ex>[l]^-{\psi_{\p,\q}}}
\end{equation}
to be the direct sum of these maps:
\begin{equation}
\begin{aligned}
\phi_{\p,\q}=&\id\otimes \q[\can]:\\
&\p[\bar{s},s]\otimes \q[\bar{t},t]\mapsto
\p[\bar{s},s]\otimes \q[ [\overline{s+t},s+t]\backslash [\bar{s},s]]\\
\\
\psi_{\p,\q}=&\p[\st]\otimes \q[\st]:\\
&\p[\bar{S}\cup S]\otimes \q[\bar{T}\cup T] \mapsto
\p[\bar{|S|},|S|]\otimes \q[\bar{|T|},|T|]
\end{aligned}
\end{equation}
where $[\overline{s+t},s+t]\backslash [\bar{s},s]=\{\overline{s+t},\dots,\overline{s+1},s+1,\dots,s+t\}$.
First, we verify that $\phi$ is a natural transformation. Let $\alpha\colon \p\rightarrow\p'$ and $\beta\colon\q\rightarrow\q'$ be two $\H$-morphisms. Then, by fixing $n$ and $m$ we have
\begin{equation}
\begin{CD}
\p[\pmn]\otimes\q[\pmm] @>\id\otimes\q[\can]>> \p[\pmn]\otimes\q[\pmmn_n]\\
@V\alpha_{[\pm n]}\otimes \beta_{[\pm m]}VV @VV\alpha_{[\pm n]}\otimes \beta_{[\pm (n+m)]\backslash[\pm n]}V \\
\p'[\pmn]\otimes\q[\pmm] @>\id\otimes\q'[\can]>> \p'[\pmn]\otimes\q'[\pmmn_n]
\end{CD}
\end{equation}
where $[\pm (n+m)]\backslash[\pm n]=\{\overline{n+m},\dots,\overline{n+1},n+1,\dots,n+m\}$. The diagram commutes since $\alpha$ and $\beta$ are natural transformations and $\can$ is a $\H$-bijection.
We check next that $\phi$ is associative and that $\phi$ and $\psi$ satisfy the braiding condition. All the unitality conditions follow, since all the maps involved are isomorphisms.

{\bf Associativity}.
$$\xymatrix{\KB(\p)\cdot\KB(\q)\cdot\KB(\r) \ar[r] & \KB(\p\cdot\q\cdot\r)}$$
The map $\phi$ leads to an unambiguous map defined by
\begin{equation*}
\xymatrix{\p[\pmn]\otimes\q[\pmm]\otimes\r[\pms] \ar[d]^{\id\otimes\q[\can]\otimes\r[\can]}\\ \p[\pmn]\otimes \q[\pmmn_n]\otimes\r[\pmmns_mn]}
\end{equation*}

{\bf Braiding}.
We must show that both directions for
$$\KB(\p\cdot\q)\cdot\KB(\r\cdot\s)\rightarrow\KB(\p\cdot\r)\cdot\KB(\q\cdot\s)$$
lead to unambiguous maps. Let $[\pm S]\sqcup [\pm T]=[\pm n]$ and $[\pm U]\sqcup [\pm V]=[\pm m]$. Taking the direct over these sets, the maps $\st$ and $\can$ induce an isomorphism:
$$\xymatrix{(\p[\pmS]\otimes\q[\pmT])\otimes(\r[\pmU]\otimes\s[\pmV])\ar[d] \\ 
\p[\pms[\otimes\r[[\pm (s+u)]\backslash [\pms]])\otimes (\q[\pmt]\otimes \s[[\pm (t+v)]\backslash [\pmt]]}$$

We have just given an idea for $\phi$ and the same can be done for $\psi$ using the dual axioms, for the proof of:


\begin{prop}
The functor $\KB$ is bilax. 
\end{prop}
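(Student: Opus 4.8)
The plan is to unpack the definition of a bilax functor into its three ingredients and verify each in turn: that $(\KB,\phi)$ is lax, that $(\KB,\psi)$ is colax, and that $\phi,\psi$ together satisfy the braiding and unitality conditions. The naturality and associativity of $\phi$, together with the braiding square, have essentially been established in the discussion preceding the statement, so the main remaining work is to dualize those arguments for $\psi$ and to dispatch the unit conditions.

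For the lax structure I would first record that $\phi$ is natural---this is the commuting square already exhibited---and that it is associative: the two bracketings of $\p\cdot\q\cdot\r$ both collapse to the single map $\id\otimes\q[\can]\otimes\r[\can]$ drawn above, because the canonical $\H$-bijections embedding the index sets of $\q$ and then $\r$ into successive complements compose to the same $\H$-bijection irrespective of the nesting, by functoriality of the species. Dually, for the colax structure I would check that $\psi=\p[\st]\otimes\q[\st]$ is natural by the identical diagram chase (with $\st$ in place of $\can$, using that $\st$ is an $\H$-bijection and that $\alpha,\beta$ are natural transformations), and that it is coassociative because standardizing the two parts of a decomposition, in either order, yields the same map $\p[\st]\otimes\q[\st]\otimes\r[\st]$.

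It then remains to verify the braiding condition, namely that the two composites $\KB(\p\cdot\q)\cdot\KB(\r\cdot\s)\to\KB(\p\cdot\r)\cdot\KB(\q\cdot\s)$ agree; using the splittings $[\pm S]\sqcup[\pm T]=[\pm n]$ and $[\pm U]\sqcup[\pm V]=[\pm m]$, both routes are assembled from $\st$ and $\can$ and land on the same summand via the same $\H$-bijection, exactly as in the square sketched above, so commutativity again follows from functoriality. All unitality and counitality conditions are immediate: $\KB(\o)=\o[\emptyset]=\K$ sits in degree zero, which is precisely the unit of $\gVec$, so $\phi_0$ and $\psi_0$ are identity maps and the left and right unit laws hold on the nose.

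The main obstacle is not analytic but bookkeeping: one must confirm that along every square the canonical and standardization $\H$-bijections compose to the same map on both paths. This reduces cleanly to two facts already available---the functoriality axiom $\p[g\circ f]=\p[g]\circ\p[f]$ and the uniqueness of order-preserving $\H$-bijections between $\H$-sets of equal cardinality, which forces any two composites with the same source and target to coincide---so once the index-set identifications are set up carefully there is no genuine difficulty.
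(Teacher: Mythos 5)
Your proposal is correct and takes essentially the same approach as the paper: the paper verifies naturality and associativity of $\phi$ and the braiding square, notes that unitality follows because all the maps involved are isomorphisms, and states that the colax structure $\psi$ is obtained by dualizing the argument---exactly the dualization you carry out. Your closing observation that every square commutes because of functoriality together with the uniqueness of order-preserving $\H$-bijections is precisely the mechanism implicit in the paper's use of $\can$ and $\st$.
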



\begin{prop}
$\KBt$ is a bilax tensor functor.
\end{prop}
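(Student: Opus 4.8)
The plan is to induce the bilax structure on $\KBt$ directly from the one on $\KB$ established in the previous proposition, by passing to $S_n$-coinvariants in each degree. In degree $n$ one has $\KBt(\p)_n=\p[\bar n,n]_{S_n}$, a quotient of $\KB(\p)_n=\p[\bar n,n]$, and I write $\pi_{\p}\colon\KB(\p)\to\KBt(\p)$ for the natural degreewise projection. Using the standard identification of a tensor product of coinvariants with the coinvariants of the tensor product under the product group,
\begin{equation*}
\KBt(\p)_s\otimes\KBt(\q)_t=\bigl(\p[\bar s,s]\otimes\q[\bar t,t]\bigr)_{S_s\times S_t},
\end{equation*}
I will show that both $\phi$ and $\psi$ descend along $\pi$ to natural transformations $\widetilde{\phi}$ and $\widetilde{\psi}$ for $\KBt$, and that the bilax axioms pass to the quotient.

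For the lax piece, recall that the component of $\phi$ sends $\p[\bar s,s]\otimes\q[\bar t,t]$ isomorphically onto the single summand of $(\p\cdot\q)[\overline{s+t},s+t]$ indexed by the $\H$-decomposition $[\bar s,s]\,|\,[\overline{s+t},s+t]\backslash[\bar s,s]$. The subgroup $S_s\times S_t$, viewed inside $S_{s+t}$ as the stabiliser of this ordered decomposition, acts on that summand, and $\id\otimes\q[\can]$ is equivariant for it (since $\can$ intertwines the $S_t$-actions). Hence $\phi$ carries the $S_s\times S_t$-coinvariant relations of its source into the $S_{s+t}$-coinvariant relations of $\KB(\p\cdot\q)_{s+t}$, and composing with the projection to $\KBt(\p\cdot\q)_{s+t}$ yields a well-defined $\widetilde{\phi}$. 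For the colax piece, the component of $\psi$ on the summand indexed by $S\,|\,T$ is standardisation $\p[\st]\otimes\q[\st]$ into $\p[\overline{|S|},|S|]\otimes\q[\overline{|T|},|T|]$. The crucial point is that every $g\in S_n\leq\Bn$ preserves bar-status, so for $g$ carrying $S|T$ to $gS|gT$ the discrepancy $\st_{gS}\circ g|_{S}\circ\st_{S}^{-1}$ lies in $S_{|S|}$ rather than in the larger $B_{|S|}$, and likewise on $T$. Thus $\psi$ of an $S_n$-translate differs from $\psi$ of the original element by the action of an element of $S_{|S|}\times S_{|T|}$, which is exactly what is quotiented out in the target; therefore $\psi$ factors through the $S_n$-coinvariants and defines $\widetilde{\psi}$.

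It then remains to verify that $\widetilde{\phi}$ and $\widetilde{\psi}$ satisfy associativity, unitality and the braiding condition. Since each $\pi$ is surjective and natural, and by construction $\widetilde{\phi}\circ(\pi\otimes\pi)=\pi\circ\phi$ with $\pi\otimes\pi$ surjective (and dually for $\widetilde{\psi}$), every coherence equation for $\widetilde{\phi},\widetilde{\psi}$ can be checked after precomposition with the appropriate surjection from tensor powers of $\KB$, where it reduces to the corresponding axiom already proved for $\phi,\psi$. The unitality conditions are immediate, since all the maps involved are isomorphisms in degree $0$.

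The main obstacle is the colax compatibility: one must ensure that the standardisation maps underlying $\psi$ introduce no sign changes when an $S_n$-element is applied, so that the discrepancy is trapped inside $S_{|S|}\times S_{|T|}$ and not merely inside $B_{|S|}\times B_{|T|}$. This is precisely where the choice of the symmetric group rather than the full hyperoctahedral group in the definition of $\KBt$ is used. Equivalently, one may bypass the explicit descent by observing that $\KBt=\Kb\circ R$, where $R\colon\SpB\to\Sp$ restricts each $\Bn$-module $\p[\bar n,n]$ to an $S_n$-module; the order-preserving bijection between $\H$-decompositions of $[\bar n,n]$ and ordinary decompositions of $[n]$ shows $R$ is strong monoidal, hence bilax, and the composite of bilax functors is bilax.
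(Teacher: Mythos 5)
Your proof is correct and follows essentially the same route as the paper: you keep the structure maps $\widetilde{\phi}_{\p,\q}=\id\otimes\q[\can]$ and $\widetilde{\psi}_{\p,\q}=\p[\st]\otimes\q[\st]$ of $\KB$ and show they descend to $S_n$-coinvariants, the key point in both arguments being that $\tau\in S_n$ preserves bar-status, so the conjugated maps $\st\circ\tau|_S\circ\st^{-1}$ lie in $S_{|S|}\times S_{|T|}$ rather than $B_{|S|}\times B_{|T|}$ and are therefore killed in the quotient. Your explicit handling of the lax side, of the coherence axioms via surjectivity of the projections, and your closing remark that $\KBt=\Kb\circ R$ for the restriction functor $R\colon\SpB\rightarrow\Sp$ are details the paper leaves implicit, but they do not change the approach.
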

\begin{proof}[\bf Proof.] 
The natural transformations $\widetilde{\phi}_{\p,\q}=\id\otimes\q[\can]$ and $\widetilde{\psi}_{\p,\q}=\p[\st]\otimes\q[\st]$ are the same as for $\KB$. We prove that $\psi$ is well-defined, and the same can be done for $\phi$. In other words, this diagram has to commute:
\begin{equation}
\xymatrix{\KB(\p\cdot \q) \ar[d] \ar[r]^-{\psi_{\p,\q}}  &  \KB(\p)\cdot \KB(\q)\ar[d]\\
 \KBt(\p\cdot \q)\ar[r]^-{\widetilde{\psi_{\p,\q}}}  & \KBt(\p)\cdot \KBt(\q)}
\end{equation}
By fixing $n$, we have
\begin{equation}
\xymatrixcolsep{5pc}\xymatrix{ {\bigoplus \atop S\sqcup T=[n]} \p[\pm S]\otimes\q[\pm T] \ar[d] \ar[r]^-{\p[\st]\otimes\q[\st]}  &  {\bigoplus \atop s+t=n} \p[\pms[\otimes \q[\pmt]\ar[d]\\
\bigl{(}{\bigoplus \atop S\sqcup T=[n]} \p[\pm S]\otimes\q[\pm T]\bigr{)}_{\Sn} \ar[r]_-{\p[\st]\otimes \q[\st]} & {\bigoplus \atop s+t=n} \p[\pms]_{S_s} \otimes \q[\pmt]_{S_t}}
\end{equation}

Let $\tau\in \Sn$. For a decomposition $S|T\models [n]$, suppose that $\tau(S)=U$ and that $\tau(T)=V$, then $\tau$ send $-S$ to $-U$ and $-T$ to $-V$. So $\tau$ give rise to two permutations $\tau_1\colon [\pm S]\rightarrow [\pm U]$ and $\tau_2\colon [\pm T]\rightarrow [\pm V]$, and again to two others $\tau_s:=\st\circ\tau_1\circ \st^{-1}\colon [\pm |S|]\rightarrow [\pm |U|]$ and $\tau_t:=\st\circ\tau_2\circ\st^{-1}\colon [\pm |T|]\rightarrow [\pm |V|]$, so that
\begin{equation}
\xymatrixcolsep{5pc}\xymatrix{ \p[\pm S]\otimes\q[\pm T] \ar[d]_{\p[\tau_1]\otimes \q[\tau_2]} \ar[r]^-{\p[\st]\otimes\q[\st]}  &  \p[\pm |S| ]\otimes \q [\pm |T|]      \ar[d]^{\p[\tau_s]\otimes\q[\tau_t]} \\
p[\pm U]\otimes\q[\pm V] \ar[r]_-{\p[\st]\otimes \q[\st]} & \p[\pm |S|] \otimes \q[\pm |T|]}
\end{equation}
commutes since $\psi$ is a natural transformation. It insures that $\phi$ and $\psi$ factor through coinvariants.
\end{proof}

Since $\KB$ and $\KBt$ are bilax tensor functors, for any bimonoid $\p$, $\KB(\p)$ and $\KBt(\p)$ are graded bialgebras. As an example, consider $\KB(\F)$, where $\F$ is the $\H$-species of section maps. Let $F=\{s_1,\dots,s_k\}$ be a set in $\F[\pm n]$. So each $s_j$ is a section map. Define 
$$F^+:=\{i: s_j(i)=i,\, 1\leq j\leq k\} \text{ , } F^-:= \{i: s_j(i)=\bar{i},\, 1\leq j\leq k\} \text{ and } F^{\ast}=[n]\backslash(F^+\cup F^-)$$
Now, encode $F$ by a word $w_F$ in three letters $a,b,c$, where the ith-letter of $w_F$ is
\begin{equation*}
w_F(i)= \begin{cases}  a & \text{if $i\in F^+$,}\\ b & \text{if $i\in F^-$,}\\ c & \text{otherwise.} \end{cases}
\end{equation*}
For example, the word associated to $F=\{(1,2,\bar{3},4),(1,\bar{2},\bar{3},4),(1,2,\bar{3},\bar{4}),(1,\bar{2},\bar{3},\bar{4})\}$ is $w_F=acbc$ and the one for $G=\{(5,\bar{6})\}$ is $w_G=ab$. Let $H=\mu_{\F}(F,G)$. Since $H^+=F^+\cup G^+$ and $H^-=F^-\cup G^-$, the multiplication is given by the concatenation of the words: $\mu_{\F}(acbc,ab)=acbcab$. Let $\Delta_{\F}(H)=\sum H_1 \otimes H_2$, where the sum is over $H_1^+,H_2^+,H_1^-,H_2^-,H_1^{\ast},H_2^{\ast}$, where $H_1^+\cup H_2^+ = H^+$, $H_1^-\cup H_2^-=H^-$ and $H_1^{\ast}\cup H_2^{\ast}=H^{\ast}$. Translated into words, the comultiplication is the deshuffle $\Delta(ab)=ab\otimes \emptyset + a\otimes b + b\otimes a + \emptyset\otimes ab$. So $\KB(\F)$ is isomorphic to the algebra freely generated by $\{a,b,c\}$.


We are now interested in the composition of the two functors $\S\colon \Sp\rightarrow \SpB$ and $\KBt\colon \SpB\rightarrow \gVec$, $\KBt\circ\S$, which is bilax since both functors are bilax, and in the relation with the functor $\Kb$, i.e. the natural transformation $\widetilde{\alpha^{\S}}\colon \KBt\circ\S\rightarrow \Kb$. First, let us look at an example of the composition:

\begin{ex}
$$\KBt\circ\S(\eB)=\K\oplus \K[1]\oplus \K[\bar{1}]\oplus \K[12]\oplus \K[1\bar{2}]\oplus \K[\bar{1}\bar{2}]\oplus \dots$$
Now, let $S$ and $T$ be two elements of $\KBt\circ\S(\eB)$. The multiplication is given by $\mu(S\otimes T)=\mu(S\otimes \can(T))$ and the comultiplication is given by $\Delta(S)=\sum_{I=S\sqcup T} \st(S)\otimes \st(T)$. So, as a bialgebra $\KBt\circ\S(\eB)$ is isomorphic to $\K[x,y]$ by identifying a set $S$ to a monomial $x_S$ where the number of $x$'s and $y$'s in $x_S$ are the number of positive and negative integers in $S$ respectively. On the other hand $\KB\circ\S(\eB)\simeq \K\langle x,y\rangle$ and $\KBb\circ\S(\eB)\simeq \K[x]$.
\end{ex}


\begin{prop}
The natural transformation $\widetilde{\alpha^{\S}}:\KBt\circ \S\rightarrow \Kb$ is defined by, for any $\p\in\Sp$,
\begin{equation}
\widetilde{\alpha^{\S}}_{\p}=\bigoplus_{n\geq 0 \atop \sn}\p[s^{-1}]
\end{equation}
where $s^{-1}\colon s([n])\rightarrow [n]$ is the unique bijection.
\end{prop}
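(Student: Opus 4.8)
The plan is to establish two things: first, that the prescribed family of maps $\bigoplus_{n\geq 0,\,s}\p[s^{-1}]$ factors through the $\Sn$-coinvariants appearing on both sides, so that $\widetilde{\alpha^{\S}}_{\p}$ is a well-defined map $\KBt\circ\S(\p)\rightarrow\Kb(\p)$; and second, that this assignment is natural in $\p$. Recall that the degree-$n$ component of $\KBt\circ\S(\p)$ is $\bigl(\bigoplus_{s}\p[s([n])]\bigr)_{\Sn}$, the sum running over section maps $s\colon[n]\rightarrow[\bar n,n]$, while the degree-$n$ component of $\Kb(\p)$ is $\p[n]_{\Sn}$. Since each $s$ restricts to a bijection $[n]\rightarrow s([n])$, the inverse $s^{-1}\colon s([n])\rightarrow[n]$ is defined and $\p[s^{-1}]\colon\p[s([n])]\rightarrow\p[n]$ makes sense; summing these and postcomposing with the projection $\p[n]\rightarrow\p[n]_{\Sn}$ yields a map out of the uncoinvarianted source, and the task is to show it descends.

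The heart of the argument is a single equivariance identity. First I would describe the $\Sn$-action on the source: for $\pi\in\Sn\subseteq\Bn$ (a permutation carrying no signs, so that $\pi(\bar i)=\overline{\pi(i)}$), the functor $\S\p$ carries the summand $\p[s([n])]$ isomorphically onto $\p[s'([n])]$ via $\p[\pi|_{s([n])}]$, where $s'$ is the section map determined by $s'(\pi(i))=\pi(s(i))$, so that $s'([n])=\pi(s([n]))$. The key computation is then that, as maps $s([n])\rightarrow[n]$, one has $(s')^{-1}\circ\pi|_{s([n])}=\pi\circ s^{-1}$: indeed both send $s(i)\mapsto\pi(i)$. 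Applying the functor $\p$ then gives the equivariance relation $\widetilde{\alpha^{\S}}_{\p}\circ\S\p[\pi]=\p[\pi]\circ\widetilde{\alpha^{\S}}_{\p}$, viewed as maps landing in $\p[n]$.

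With this identity in hand, well-definedness is immediate: after postcomposing with $\p[n]\rightarrow\p[n]_{\Sn}$ the operator $\p[\pi]$ acts as the identity, so $\widetilde{\alpha^{\S}}_{\p}$ kills every element of the form $x-\S\p[\pi](x)$ and hence factors through $\bigl(\bigoplus_s\p[s([n])]\bigr)_{\Sn}$. Naturality in $\p$ is the routine remaining point: for a morphism $\theta\colon\p\rightarrow\q$ of species, the required square reduces, summand by summand, to the naturality square of $\theta$ along the bijection $s^{-1}\colon s([n])\rightarrow[n]$, namely $\theta_{[n]}\circ\p[s^{-1}]=\q[s^{-1}]\circ\theta_{s([n])}$, which holds because $\theta$ is a natural transformation.

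I expect the only real obstacle to be the bookkeeping in the equivariance step: correctly tracking how a signless permutation $\pi$ permutes the indexing set of section maps and verifying that the reindexed inverse $(s')^{-1}$ composes with $\pi|_{s([n])}$ to give precisely $\pi\circ s^{-1}$. Once that combinatorial identity is pinned down, both the descent to coinvariants and the naturality follow formally.
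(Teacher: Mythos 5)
Your proof is correct, and it is in fact more complete than the paper's own. The paper's entire proof consists of your second step: it fixes an arrow $\beta\colon\p\rightarrow\q$, restricts to degree $n$ and to the summands indexed by sections $\sn$, and observes that the resulting square commutes ``since $\beta$ is a natural transformation and $s^{-1}\colon s([n])\rightarrow[n]$ is a bijection'' --- exactly your naturality argument with $\theta$ renamed $\beta$. What the paper does not address (its diagrams are even written with the un-quotiented spaces $\bigoplus_{\sn}\p[s([n])]$ and $\p[n]$ rather than their quotients) is your first step: that the formula $\bigoplus_s\p[s^{-1}]$, which a priori is only a map between those un-quotiented direct sums, genuinely descends to a map $\bigl(\bigoplus_s\p[s([n])]\bigr)_{\Sn}\rightarrow\p[n]_{\Sn}$. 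This is a real requirement, not a formality, because both the source $\KBt\circ\S(\p)$ and the target $\Kb(\p)$ are coinvariant spaces. Your equivariance identity $(s')^{-1}\circ\pi|_{s([n])}=\pi\circ s^{-1}$ (both sides send $s(i)\mapsto\pi(i)$), which upon applying the functor $\p$ gives $\p[(s')^{-1}]\circ\p[\pi|_{s([n])}]=\p[\pi]\circ\p[s^{-1}]$, is precisely the statement that $\bigoplus_s\p[s^{-1}]$ is $\Sn$-equivariant for the action of $\Sn\subseteq\Bn$ permuting the summands, and hence induces a map on coinvariants. So your write-up supplies the step the paper leaves implicit, and coincides with the paper on the step it does carry out; each approach ``buys'' the same naturality, but yours also certifies that the proposition's map exists at the level of coinvariants in the first place.
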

\begin{proof}[\bf Proof.]
$\widetilde{\alpha^{\S}}$ assigns a linear map $\widetilde{\alpha^{\S}}_{\p}$ for every object $\p\in\Sp$. Furthermore, for all arrow $\beta:\p\rightarrow \q$ in $\Sp$, we must prove that this diagram commutes:
\begin{equation}
\begin{CD}
\KBt\circ\S(\p) @>\bigoplus_{n\geq 0 \atop \sn}\p[s^{-1}]>> \Kb(\p) \\
@V (\KBt\circ\S)[\beta] VV @VVK[\beta]V\\
\KBt\circ\S(\q) @>>\bigoplus_{n\geq 0 \atop \sn}\q[s^{-1}]> \Kb(\q)
\end{CD}
\end{equation}
By fixing $n$, we get the following diagram for each section map:
\begin{equation}
\begin{CD}
\bigoplus_{\sn}\p[s([n])] @>\bigoplus_{\sn}\p[s^{-1}]>> \p[n] \\
@V \bigoplus_{\sn}\beta_{s([n])} VV @VV\beta_nV\\
\bigoplus_{\sn}\q[s([n])] @>>\bigoplus_{\sn}\q[s^{-1}]> \q[n]
\end{CD}
\end{equation}
Since $\beta$ is a natural transformation and $s^{-1}\colon s([n])\rightarrow [n]$ is a bijection, this diagram commutes.
\end{proof}

\begin{ex}
Consider the linear order species. On one hand
\begin{equation}
\KBt\circ \S(\ell) = \K\oplus \K[1]\oplus \K[\bar{1}]\oplus \K[12]\oplus \K[\bar{1}2]\oplus \K[1\bar{2}] \oplus \K[\bar{1}\bar{2}]\oplus \dots
\end{equation}
and on the other hand
\begin{equation}
\Kb(\ell)=\K\oplus\K[1]\oplus \K[12] \oplus \K[123] \oplus\dots
\end{equation}

Now applying $\widetilde{\alpha^{\S}}_{\ell}$ to $\KBt\circ \S (\ell)$, we see on this example that $\widetilde{\alpha^{\S}}$ is the transformation between a $B_n$-module
and a $S_n$-module that forgets the sign.
\end{ex}


\section{Hopf algebra of set compositions}\label{Sec:QPi}

As a final word, we study the species $\ell^{\ast}\circ \exp_+$ and its algebraic operations under the composite of functors
$$\Sp \buildrel \S \over{\longrightarrow} \SpB \buildrel \KBt \over{\longrightarrow}\gVec$$
and we give the isomorphism $\KBt\circ \S(\ell^{\ast}\circ\exp_+)\simeq \DQSym$. Let $\DQL=\KBt\circ \S(\ell^{\ast}\circ\exp_+)$. Explicitely,
\begin{equation}
\DQL:=\bigoplus_{n\geq 0} \Bigl{(}\bigoplus_{s:[n]\rightarrow [\bar{n},n]} \bigoplus_{F\models s([n])}\exp_+(F)\Bigr{)}_{S_n}\\
\end{equation}
Recall that we denote $F_1|F_2|\dots |F_l$, the only element in $\exp_+[F_1]\otimes \exp_+[F_2]\otimes\dots\otimes\exp_+[F_l]$. Now, the representative of each equivalence class will be the set composition in which the integers are ordered in absolute value and where all the negative integers are on the left of each part. As an example, $\bar{1}2|\bar{3}$ is the representative of $[\bar{1}2|\bar{3},\bar{2}1|\bar{3},\bar{1}3|\bar{2},\bar{3}1|\bar{2},\bar{2}3|\bar{1},\bar{3}2|\bar{1}]$.

\begin{definition}
A bicomposition $\Bigl{(}{\alpha_1 \atop \beta_1}\dots {\alpha_k \atop \beta_k}\Bigr{)}$ of $n$ is a list of $1\times 2$ vectors $\Bigl{(}{\alpha_i \atop \beta_i}\Bigr{)}\neq \Bigl{(} {0\atop 0}\Bigr{)}$ called biparts such that $\sum_{i=1}^k(\alpha_i+\beta_i)=n$.
\end{definition}
For example, the bicompositions of $n=2$ are
$$\Bigl{(}{2 \atop 0}\Bigr{)}, \Bigl{(}{1 \atop 0}{1\atop 0}\Bigr{)}, \Bigl{(}{1 \atop 1}\Bigr{)}, \Bigl{(}{1 \atop 0}{0\atop 1}\Bigr{)}, \Bigl{(}{0 \atop 1}{1\atop 0}\Bigr{)}, \Bigl{(}{0 \atop 1}{0\atop 1}\Bigr{)}, \Bigl{(}{0 \atop 2}\Bigr{)}$$

\begin{prop}
Each equivalence class of $A$ is determined by the number of positive and negative integers in 
each part. In other words, the elements of $A$ are in bijection with the bicompositions.
\end{prop}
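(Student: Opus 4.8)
The plan is to exhibit, for each fixed $n$, an explicit bijection between the equivalence classes in $A = \DQL$ and the bicompositions of $n$, given by the map that records in each part the number of positive and of negative integers. First I would make the $S_n$-action explicit. Viewing $S_n$ as the sign-preserving subgroup of $\Bn$, an element $\tau$ acts on $[\bar n,n]$ by $\tau(i)=\tau(i)$ and $\tau(\bar i)=\overline{\tau(i)}$; hence it sends a section map $s$ to another section map and carries a set composition $F=F_1|\dots|F_k$ of $s([n])$ to the set composition $\tau(F)=\tau(F_1)|\dots|\tau(F_k)$ of $\tau(s([n]))$. Since $\exp_+(F)$ is one-dimensional, this action merely permutes the basis of set compositions with no scalars, so the coinvariant space has a basis indexed by the $S_n$-orbits. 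Thus the equivalence classes of $A$ are precisely these orbits, and the canonical representative described before the statement (integers ordered by absolute value, negatives placed on the left within each part) picks out one element of each.

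Next I would define the candidate bijection $\Phi$ on orbits by $\Phi([F]) = \bigl({\alpha_1\atop\beta_1}\cdots{\alpha_k\atop\beta_k}\bigr)$, where $\alpha_i$ and $\beta_i$ are the number of positive and of negative integers in the part $F_i$. Because each part of a set composition is nonempty we have $\alpha_i+\beta_i\geq 1$, so no bipart is $\bigl({0\atop 0}\bigr)$, and $\sum_i(\alpha_i+\beta_i)=|s([n])|=n$, so $\Phi([F])$ is genuinely a bicomposition of $n$. Well-definedness is immediate: a sign-preserving permutation alters neither the number of positive nor the number of negative entries of any part, so $\Phi(\tau F)=\Phi(F)$ and $\Phi$ factors through orbits.

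It then remains to verify that $\Phi$ is a bijection, which I would do by establishing surjectivity and injectivity separately. Surjectivity is the easy direction: given any bicomposition, I would allot the absolute values $1,\dots,n$ to the biparts in order and negate exactly $\beta_i$ of the values assigned to the $i$-th part (following the canonical convention above), producing a set composition mapped by $\Phi$ to the prescribed bicomposition. The crux is injectivity. Here I would take $F=F_1|\dots|F_k$ and $F'=F'_1|\dots|F'_k$ with $\Phi(F)=\Phi(F')$ and construct $\tau\in S_n$ with $\tau(F)=F'$, so that $[F]=[F']$. The key observation is that, because $F$ is a composition of $s([n])$ and $s$ is a section map, the absolute values occurring in $F$ partition $[n]$ into the blocks $P_1,N_1,\dots,P_k,N_k$, where $P_i$ is the set of positive integers of $F_i$ and $N_i$ the set of absolute values of its negative integers; likewise $F'$ yields blocks $P'_1,N'_1,\dots,P'_k,N'_k$. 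The hypothesis $\Phi(F)=\Phi(F')$ gives $|P_i|=|P'_i|$ and $|N_i|=|N'_i|$ for every $i$, so there is a bijection $\tau$ of $[n]$ sending each $P_i$ onto $P'_i$ and each $N_i$ onto $N'_i$; extending $\tau$ sign-preservingly, one checks part by part that $\tau(F_i)=F'_i$.

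I expect the matching argument in the injectivity step to be the main obstacle, in that one must confirm that the equal block sizes of the two absolute-value partitions really assemble into a single permutation $\tau\in S_n$ carrying all parts simultaneously while respecting signs. This is ultimately routine, since $\tau$ operates only on absolute values and the sign pattern of the image is dictated entirely by the data of $F'$, but it is the place where the hypothesis $\Phi(F)=\Phi(F')$ is used in full.
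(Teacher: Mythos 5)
Your proposal is correct and follows essentially the same route as the paper: the map counting positive and negative entries in each part is well defined on $S_n$-orbits, injectivity is shown by assembling a sign-preserving permutation matching parts (the paper does this via $\pi(|x_i^j|)=|y_i^j|$ using the canonical representatives, you via the blocks $P_i,N_i$), and surjectivity by explicitly constructing a representative set composition from a given bicomposition. Your treatment of the sign-preservation point in the injectivity step is slightly more explicit than the paper's, but the argument is the same.
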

\begin{proof}[\bf Proof.]
For a class $[x]$ and for any element $x\in[x]$, the number of positive integers in the i-th part will be denoted by $|x_i|^+$ and the number of negative integers in the i-th part will denoted by $|x_i|^-$. Let $[x]$ and $[y]$ be two elements of $A$, with $x\in [x]$ and $y\in [y]$. Suppose that for all $i$, $|x_i|^+=|y_i|^+$ and $|x_i|^-=|y_i|^-$. Let $x_i^j$ and $y_i^j$ be the $j$-th integers in the $i$-th part of $x$ and $y$ respectively. Build a permutation $\pi$ by setting $\pi(|x_i^j|)=|y_i^j|$, for all $i,j$. So $x$ and $y$ are in the same class.

Now, form a bicomposition $\Bigl{(}{\alpha_1 \atop \beta_1}\dots {\alpha_k \atop \beta_k}\Bigr{)}$, from a composition $x$ of $n$ integers, by setting $\alpha_i$ and $\beta_i$ to be respectively the number of negative and positive integers in the i-th part of $x$. It is a bicomposition as each vector is non-zero and the sum of its integers is $n$.

Conversely, form a composition $x$ from a bicomposition, $\Bigl{(}{\alpha_1 \atop \beta_1}\dots {\alpha_k \atop \beta_k}\Bigr{)}$, by building each part of $x$, where the ith-part is given by the negative integers:
$$\overline{\alpha_1+\beta_1+\dots+ \alpha_{i-1}+\beta_{i-1}+1}, \dots ,\overline{\alpha_1+\beta_1+\dots+ \alpha_{i-1}+\beta_{i-1}+\alpha_i}$$
and the positive integers:
$$(\alpha_1+\beta_1+\dots+ \alpha_{i}+1),\dots ,(\alpha_1+\beta_1+\dots+ \alpha_{i}+\beta_i)$$

This set composition is part of a class of $A$ with $\alpha_i$ negative integers and $\beta_i$ positive integers in the $i$-th part. So for every bicomposition, there is a unique class of $A$ associated to it.
\end{proof}

For example, the bicomposition $\Bigl{(}{0\atop 2} {1\atop 0}\Bigr{)}$ represents the class $[12|\bar{3},13|\bar{2},23|\bar{1}]$. As we have seen in Definition~\ref{Tvee}, the product is given by quasishuffle and the coproduct by deconcatenation. For example,
$\mu(\bar{1}\bar{2},\bar{1}|2) = \mu(\bar{1}\bar{2},\bar{3}|4) = \bar{1}\bar{2}|\bar{3}|4 + \bar{1}|\bar{2}\bar{3}|4 + \bar{1}|2|\bar{3}\bar{4}+ \bar{1}\bar{2}\bar{3}|4 + \bar{1}|\bar{2}\bar{3}4$ which correspond to the quasishuffle of the bicomposition by the shuffle of the columns and two adjacent column coming from different composition can be added using: $ \bigl{(}{\alpha_i \atop \beta_i}\bigr{)} +\bigl{(}{\alpha_{i+1} \atop \beta_{i+1}}\bigr{)} =\bigl{(}{\alpha_i + \alpha_{i+1} \atop \beta_i + \beta_{i+1}}\bigr{)}$.
For example,
\begin{equation}
\mu( \Bigl{(} {2\atop 0} \Bigr{)} ,\Bigl{(} {1\atop 0} {0\atop 1}\Bigr{)})= \Bigl{(} {2\atop 0} {1\atop 0} {0\atop 1}\Bigr{)}+\Bigl{(} {1\atop 0} {2\atop 0}{0\atop 1}\Bigr{)}+\Bigl{(} {1\atop 0} {0\atop 1}{2\atop 0}\Bigr{)}+\Bigl{(} {3\atop 0} {0\atop 1}\Bigr{)}+\Bigl{(} {1\atop 0} {2\atop 1}\Bigr{)}
\end{equation}

An example of the comultiplication $\Delta(\bar{1}|\bar{2}3)= \bar{1}|\bar{2}3\otimes \emptyset +\bar{1}\otimes \bar{1}2+
\emptyset\otimes \bar{1}|\bar{2}3$ or equivalently, on bicomposition
$$\Delta({ 1\: 1 \choose 0\: 1})= { 1\: 1 \choose 0\: 1} \otimes \emptyset + {1\choose 0}\otimes {1\choose 1} + \emptyset \otimes { 1\: 1 \choose 0\: 1}$$

\begin{definition}
$\DQSym$ is the Hopf algebra generated by $\{M_{ \bigl{(}{\alpha \atop\beta} \bigr{)} } : \bigl{(}{\alpha \atop\beta} \bigr{)} \text{ bicomposition}\}$. The multiplication and the comultiplication are given respectively by quasishuffle and deconcatenation.
\end{definition}
See \cite{AB} for more detailed information about this algebra.

\begin{theorem}
\begin{equation}
\DQL \simeq \DQSym \text{ as Hopf algebras}
\end{equation}
\end{theorem}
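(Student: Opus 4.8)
The plan is to exhibit an explicit isomorphism $\Theta\colon\DQL\to\DQSym$ on basis elements and then verify that it respects both structure maps; since both sides are connected graded bialgebras, once $\Theta$ is a bialgebra isomorphism it is automatically a Hopf algebra isomorphism, the antipode of a connected graded bialgebra being determined by its bialgebra structure. For the underlying linear isomorphism I would invoke the preceding proposition: a basis of the degree-$n$ part of $\DQL$ is given by the $\Sn$-classes of signed set compositions $F\models s([n])$, and each such class is encoded by a unique bicomposition $\bigl({\alpha_1\atop\beta_1}\cdots{\alpha_k\atop\beta_k}\bigr)$ recording, part by part, the numbers of negative and positive entries. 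Declaring $\Theta([x])=M_{\bigl({\alpha\atop\beta}\bigr)}$ is then a degree-preserving linear bijection; it sends the degree-$0$ line (the empty composition) to the unit $M_{\emptyset}$, and the counits are the projections onto degree $0$ on both sides, so unit and counit are matched at once.

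For multiplicativity I would first unwind the product on $\DQL$ through the two bilax structures. With $\mathbf h=\S(\ell^{\ast}\circ\exp_+)$, the multiplication on $\KBt(\mathbf h)$ is the composite $\KBt(\mathbf h)\cdot\KBt(\mathbf h)\xrightarrow{\phi}\KBt(\mathbf h\cdot\mathbf h)\to\KBt(\mathbf h)$, where $\phi=\id\otimes\mathbf h[\can]$ relabels the second factor onto the fresh signed block $[\pm(n+m)]\setminus[\pm n]$, and the monoid multiplication of $\mathbf h$ is the quasishuffle that $\S$ transports componentwise from $T^{\vee}(\exp_+)$ (its bilax maps being identities, $\phi^{\S}=\id$). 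Passing to $\Sn$-coinvariants, the product of two classes is therefore obtained by choosing representatives on disjoint signed label sets $S,T$ and summing, over all $H\in F\quasishuf G$, the class $[H]$ — exactly the recipe illustrated by the worked example $\mu(\bar1\bar2,\bar1|2)$.

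The combinatorial heart is to identify this with the quasishuffle product on bicompositions. Because the two representatives live on disjoint label sets, a quasishuffle set composition $H$ is recovered from its restrictions $H|_{S}$ and $H|_{T}$, so the set $F\quasishuf G$ is in bijection with the ways of interleaving-and-merging the two lists of \emph{parts}. Replacing each part by its bipart turns such a pattern into a quasishuffle of the two lists of biparts, and a merge of two adjacent parts coming from different factors becomes the componentwise addition $\bigl({\alpha_i\atop\beta_i}\bigr)+\bigl({\alpha_{i+1}\atop\beta_{i+1}}\bigr)=\bigl({\alpha_i+\alpha_{i+1}\atop\beta_i+\beta_{i+1}}\bigr)$. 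This correspondence commutes with $\Theta$ and is multiplicity-preserving: the number of $H$ whose class is a given bicomposition equals the number of bipart quasishuffles producing it. Hence $\Theta$ intertwines the two products.

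For the coproduct I would run the colax structure of $\KBt$ (the maps $\psi=\mathbf h[\st]\otimes\mathbf h[\st]$) against the deconcatenation coproduct of $T^{\vee}(\exp_+)$ from Definition~\ref{Tvee}: on a representative $x=x_1|\cdots|x_k$ the comultiplication deconcatenates at each gap and then standardizes the two factors, and since $\st$ only relabels while preserving the sign counts in every part, it preserves biparts. Thus $\Delta([x])=\sum_{i=0}^{k}[x_1|\cdots|x_i]\otimes[x_{i+1}|\cdots|x_k]$ becomes deconcatenation of the associated bicomposition, which is the coproduct of $\DQSym$. I expect the coproduct, the unit and the counit to be routine; the one genuinely delicate step is the multiplication, where one must check that the reduction of quasishuffles of signed set compositions modulo $\Sn$ reproduces the bicomposition quasishuffle \emph{with its correct multiplicities}, i.e.\ that distinct part-patterns collapsing to the same bicomposition are counted exactly as in $\DQSym$.
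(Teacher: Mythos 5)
Your proposal is correct and follows essentially the same route as the paper: the paper's proof simply declares that the bijection from the preceding proposition (classes of signed set compositions $\leftrightarrow$ bicompositions) sends a class to the monomial basis element $M_{\bigl({\alpha\atop\beta}\bigr)}$ and preserves the quasishuffle product and deconcatenation coproduct. Your write-up supplies the details the paper leaves implicit (unwinding the bilax structures, the multiplicity check for quasishuffles, and the automatic compatibility of antipodes), but the underlying argument is the same.
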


\begin{proof}[\bf Proof.] 
The isomorphism sending a diagonal composition $\Bigl{(} {\alpha \atop \beta} \Bigr{)}=\Bigl{(} {\alpha_1\atop\beta_1} {\alpha_2\atop\beta_2} \dots {\alpha_k \atop\beta_k}\Bigr{)}$ to the monomial basis $M_{\bigl{(} {\alpha \atop \beta} \bigr{)}}$ is one of Hopf algebra as it preserves product and coproduct.
\end{proof}

\end{document}